\newtheorem{thm}{Theorem}[section]
\newtheorem{lem}{Lemma}[section]
\newtheorem{exam}{Example}[section]
\newtheorem{defn}{Definition}[section]
\newtheorem{cor}{Corollary}[section]
\newtheorem{rmk}{Remark}[section]
\DeclareMathOperator{\Syl}{Syl}
\DeclareMathOperator{\Res}{Res}
\DeclareMathOperator{\sym}{Sym}
\DeclareMathOperator{\diag}{diag}
\title{On dense subsets of matrices with distinct eigenvalues and distinct singular values \thanks{Part of the contents of this article is included in the first author's M.Sc., thesis. }}
\author{Himadri Lal Das \thanks{Department of Mathematics, Indian Institute of Technology Kharagpur, Kharagpur 721302, India. Email: himadrilaldas2014@gmail.com}\  \and M. Rajesh Kannan\thanks{Department of Mathematics, Indian Institute of Technology Kharagpur, Kharagpur 721302, India. Email: rajeshkannan@maths.iitkgp.ac.in, rajeshkannan1.m@gmail.com }
}
\date{\today}
\begin{document}
\maketitle
\baselineskip=0.25in

\begin{abstract}  It is well known that the set of all  $ n \times n $ matrices with distinct eigenvalues is a dense subset of the set of all real or complex  $ n \times n $ matrices. In [Hartfiel, D. J. Dense sets of diagonalizable matrices. Proc. Amer. Math. Soc., 123(6):1669–1672, 1995.], 
 the author established a necessary and sufficient condition for a subspace of the set of all $n \times n$ matrices to have a  dense subset of matrices with distinct eigenvalues. We are interested in finding a few necessary and sufficient conditions for a subset of the set of all $n \times n$ real or complex matrices to have a dense subset of matrices with distinct eigenvalues. Some of our results are generalizing the results of Hartfiel.  Also, we study the existence of dense subsets of matrices with distinct singular values, distinct analytic eigenvalues, and distinct analytic singular values, respectively, in the subsets of the set of all real or complex matrices.
    \end{abstract}

{\bf AMS Subject Classification(2010):} 15A15, 15A18.

\textbf{Keywords. }Dense set, Eigenvalue, Singular value, Analytic eigenvalue, Analytic singular value.

\section{Introduction}
It is well known that the set of all  $ n \times n $ matrices with distinct eigenvalues is a dense subset of the set of all real or complex  $ n \times n $ matrices. An arbitrary subspace of the set of all $n \times n$ matrices may not have a dense subset of matrices with distinct eigenvalues.  For a subset $\Omega$ of the set of all $n \times n$ matrices, let $\Omega_d$ denote the set of all matrices in $\Omega$ with distinct eigenvalue. In \cite{hart}, the author established the following:

\begin{thm}[{\cite[Theorem 1, Corollary 1]{hart}}]\label{hart-main-res}
	If $\Omega$ is a subspace and  $\Omega_d$ is nonempty, then  $\Omega_d$ is dense in $\Omega$. If $\Omega$ is a convex set and  $\Omega_d$ is nonempty, then  $\Omega_d$ is dense in $\Omega$
\end{thm}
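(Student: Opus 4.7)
The plan is to prove both statements simultaneously by a linear interpolation argument combined with the observation that the discriminant of the characteristic polynomial is a polynomial in the interpolation parameter.

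Fix any $A \in \Omega_d$ and any target matrix $B \in \Omega$; I want to show $B$ lies in the closure of $\Omega_d$. I would consider the one-parameter family
\[
M(t) = (1-t)B + tA, \qquad t \in [0,1].
\]
In the convex case, $M(t) \in \Omega$ for all $t \in [0,1]$ by definition; in the subspace case, this is immediate (and in fact $M(t) \in \Omega$ for all real or complex $t$, which gives even more flexibility). Thus the whole segment lies inside $\Omega$, and it suffices to find values of $t$ arbitrarily close to $0$ for which $M(t)$ has distinct eigenvalues.

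Next I would look at the characteristic polynomial $p_t(\lambda) = \det(\lambda I - M(t))$. Since the entries of $M(t)$ are affine functions of $t$, the coefficients of $p_t$ are polynomials in $t$, and hence its discriminant $D(t) = \mathrm{disc}_\lambda\, p_t(\lambda)$ is a polynomial in $t$. The crucial point is that $M(t)$ has distinct eigenvalues if and only if $D(t) \neq 0$. By hypothesis $A = M(1)$ has distinct eigenvalues, so $D(1) \neq 0$, which forces $D$ to be a \emph{nonzero} polynomial in $t$.

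A nonzero polynomial has only finitely many roots, so there exist values of $t$ arbitrarily close to $0$ with $D(t) \neq 0$. For any such $t$, $M(t) \in \Omega_d$, and $M(t) \to B$ as $t \to 0$. This shows $B$ lies in the closure of $\Omega_d$, proving density in both the subspace and convex cases. The main conceptual step is realizing that the discriminant is a polynomial in the parameter, after which the rest is just the fact that a nonzero polynomial has isolated zeros; no genuine obstacle arises, which is why the subspace and convex statements can be handled by a single short argument.
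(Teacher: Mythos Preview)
Your proof is correct and follows essentially the same route the paper takes in its own generalizations of this cited result (see the proofs of Theorems~\ref{pd} and~\ref{a8}): linearly interpolate between the target matrix and a matrix with distinct eigenvalues, observe that the condition for repeated eigenvalues is the vanishing of a polynomial in the parameter $t$, and use that a nonzero polynomial has only finitely many roots. The only cosmetic difference is that you phrase the key quantity as the discriminant $D(t)=\mathrm{disc}_\lambda\,p_t(\lambda)$, whereas the paper writes it as the resultant $\Res(p_t(y),p_t'(y))$; these agree up to a nonzero scalar, so the arguments are identical in substance.
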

The motivation to consider problems of these nature arises in analyzing the behavior of a system \cite{hart-track}.  In \cite{hart-track}, the author considered a  particular case of this result viz., for the set of all stochastic matrices. It is of interest to know whether the counterpart of these results holds for singular values, analytic eigenvalues, and analytic singular values. In \cite{mo-quest}, we found the following question: for a complex square matrix $A$ does there exist a complex symmetric matrix $S$ such that $\Vert S \Vert \leq \epsilon $ such that $A+S$ has only distinct singular values.  We will be able to answer this question with the aid of our results (Remark \ref{mo-ans}).

The first objective of this article is to extend Theorem \ref{hart-main-res} for a larger class of matrices, and weakening the assumptions. For a convex set $\Omega$, we show that, if the closure of $\Omega$ contains a matrix with distinct eigenvalues, then $\Omega_d$ is dense in $\Omega$ (Theorem \ref{a8}). Also, we extend this result for non-convex sets (Theorem \ref{conv-gen-eig}).    The second objective of this article is to study the counterpart of these results for singular values of rectangular matrices.

 For an $n \times n$ matrix $A$,  define a $(0,1)$-matrix $A^{'}$ as follows: $(i,j)$th entry of $A^{'}$ is $1$, if the $(i,j)$th entry of the matrix $A$ is nonzero, and $0$ otherwise. For a $(0,1)$-matrix $P$ of size $n \times n$, define $W_P= \{A:  A^{'}\leq P ~ \mbox{componentwise} \}$.    In \cite{hart}, the author applied  Theorem \ref{hart-main-res} to the subspace $W_P$, and obtained a necessary and sufficient condition, in terms of $P$,  for the subspace $W_P$ to have a dense subset of matrices with distinct eigenvalues. In this article, we  study the counter part of this result for singular values of rectangular matrices (Theorem \ref{zero-patt-sing}).
Also we study the  existence of dense subsets of matrices with distinct analytic eigenvalues and distinct analytic singular values, using the notion of  analytic spectral decomposition and analytic singular value decomposition. For details about analytic spectral decomposition and analytic singular value decomposition, we refer to \cite{sing, ging,gin, kato}. Besides this, we define a class of polynomials  associated with a matrix defined in terms of the entries of the matrix, and we study the same problem for these newly defined class of polynomials viz., existence of dense subset of matrices  with distinct zeros with respect to  polynomials in this class.

The organization of this article is as follows: In Section 2, we collect the needed known definition and results. Section 3  divided into two subsections, in Subsection 3.1, we prove results related to the denseness of the set of matrices with distinct eigenvalues. We also provide a weaker form of Theorem \ref{hart-main-res}. In Subsection 3.2, we extend some of the results of Subsection 3.1 for the set of matrices with distinct singular values, which includes an extension of Theorem 1 of \cite{hart} for singular values of rectangular complex (or real) matrices. In Section 4, we prove the counterpart of some of the results of Section 3 for the analytic eigenvalues and analytic singular values. In Section 5, we introduce a class of functions defined in terms of entries of a given matrix, which includes characteristic polynomial, and prove results  similar to that  of Section 3.

\section{Notation, definition and preliminary results}
Let $\mathbb{R}$ and $\mathbb{C}$ denote the set of all real and complex numbers, respectively. 
Let $M_{m \times n}(\mathbb{F})$ denote the set of all $m \times n$ matrices whose entries are from $\mathbb{F}$, where $\mathbb{F}$  is $\mathbb{R}$ or $\mathbb{C}$. Throughout this paper, we assume $m \geq n$. Let $A \in M_{m\times n}( \mathbb{F})$, with  $m\geq n$. We   call $a_{ii}$, $i=1,2,\dots,n$, the diagonal entries of $A$. For a matrix $A\in M_{m \times n}(\mathbb{R})$,  $A^T$ denotes the transpose of $A$,  and for  $A\in M_{m \times n} (\mathbb{C})$,  $A^{\ast}$ denotes the conjugate transpose of $A$ . For a given matrix $A \in  M_{n \times n}(\mathbb{C})$ the determinant of $A$ is denoted by $\det A$.  If $A$ is a positive semidefinite $n \times n$ matrix, then there exists a unique positive semidefinite $n\times n$ matrix $B$ such that $A=B^2$. Such a matrix $B$ is said to be the square root of $A$, and is denoted by $\sqrt{A}$.

For a matrix $A :=(a_{ij})$ in $M_{m\times n}(\mathbb{C})$, the Frobenius norm of $A$, denoted by $\Vert A \Vert_F$, is defined by $\sqrt{  \sum\limits_{i=1}^m \sum\limits_{j=1}^n \vert a_{ij}\vert^2 }$. Throughout this article, we use the topology induced by the Frobenius norm on the set of all real or complex $m \times n$ matrices. For a matrix $A \in M_{m\times n}(\mathbb{C})$, define $\mathrm{B}(A;\epsilon) = \{B \in M_{m \times n}(\mathbb{C}) : \Vert B - A \Vert < \epsilon\}.$

For $X \subseteq  M_{m \times n}(\mathbb{C})$ (or $M_{m \times n}(\mathbb{R})$), the closure of $X$ is denoted by $\overline{X}$. In a metric space $E$, a point $p\in E$ is called an isolated point in $E$ if $p$ is not a limit point. We call a set $E$ isolated if every point of $E$ is isolated point.
\begin{thm}[Theorem 11.4 (a), \cite{har}]\label{closed} In  $M_{n \times n}(\mathbb{C})$,  the set of matrices that have multiple eigenvalues (at least one eigenvalue of multiplicity 2 or more) is closed.
\end{thm}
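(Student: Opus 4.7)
The plan is to exhibit the set of matrices with a repeated eigenvalue as the zero set of a single polynomial in the entries, which will make closedness immediate. The key algebraic tool is the discriminant (equivalently, the resultant of a polynomial with its derivative): for a monic polynomial $p(x)=x^n+c_{n-1}x^{n-1}+\cdots+c_0$, there is a universal polynomial $\mathrm{disc}(p)$ in $c_0,\ldots,c_{n-1}$ such that $p$ has a repeated root in $\mathbb{C}$ if and only if $\mathrm{disc}(p)=0$. Equivalently, $\mathrm{disc}(p)=\pm\Res(p,p')$ up to a normalizing constant.

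First, I would observe that for $A\in M_{n\times n}(\mathbb{C})$ the characteristic polynomial $\chi_A(x)=\det(xI-A)$ is a monic polynomial of degree $n$ whose coefficients are themselves polynomials in the entries $a_{ij}$ of $A$ (this is just the Leibniz expansion of the determinant). Next, I would compose with the discriminant to obtain the map
\[
D:M_{n\times n}(\mathbb{C})\longrightarrow\mathbb{C},\qquad D(A)=\mathrm{disc}(\chi_A).
\]
Since the coefficients of $\chi_A$ depend polynomially on the entries of $A$, and $\mathrm{disc}$ is a polynomial in those coefficients, $D$ is a polynomial function of the $n^2$ entries of $A$; in particular, $D$ is continuous with respect to the Frobenius norm.

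Finally, I would identify the set of matrices with a multiple eigenvalue with $D^{-1}(\{0\})$: $A$ has an eigenvalue of multiplicity $\geq 2$ iff $\chi_A$ has a repeated root iff $D(A)=0$. Being the preimage of the closed singleton $\{0\}$ under a continuous map, this set is closed.

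There is really no serious obstacle here; the only point to be careful about is the statement that the discriminant of a monic polynomial vanishes exactly when the polynomial has a repeated root over $\mathbb{C}$, which is standard (it follows from the factorization $\mathrm{disc}(p)=\prod_{i<j}(\lambda_i-\lambda_j)^2$ over an algebraic closure, or from $\Res(p,p')=0\iff\gcd(p,p')\neq 1$). If one prefers to avoid the discriminant, an equivalent proof would show the complement is open using continuity of roots (Rouch\'e's theorem): around any $A$ whose eigenvalues $\lambda_1,\ldots,\lambda_n$ are distinct, pick pairwise disjoint closed disks around each $\lambda_i$ and use that for $B$ sufficiently close to $A$, $\chi_B$ has exactly one root in each disk.
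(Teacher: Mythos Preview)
Your proof is correct. Note, however, that the paper does not supply its own proof of this statement: Theorem~\ref{closed} is quoted as a known result from the literature (Theorem~11.4(a) of \cite{har}) and is used as a black box in later arguments (e.g.\ in Lemma~\ref{dis} and Theorem~\ref{a8}). So there is no ``paper's proof'' to compare against.

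Your argument via the discriminant $D(A)=\mathrm{disc}(\chi_A)=\pm\Res(\chi_A,\chi_A')$ is the standard one and fits naturally with the paper's toolkit: the resultant is introduced in Section~2 (Theorem~\ref{res}) precisely to detect repeated roots, and the same mechanism drives the proofs of Theorems~\ref{a4} and~\ref{pd}. The alternative you mention (continuity of roots/Rouch\'e) is also fine but less in the spirit of the paper, which leans on the resultant throughout.
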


%

Let $S$ be a commutative ring with unity. Let $f(x)=\sum\limits_{k=0}^n a_k x^k$ and $g(x)=\sum\limits_{k=0}^m b_k x^k$ be two polynomials in $S[x]$ of degree $n$ and $m$, respectively.

\begin{defn}[Resultant]
The Sylvester matrix of $f(x),g(x) \in S[x]$ is an $(n+m) \times (n+m)$ matrix, denoted by $\Syl(f,g)$,  defined as
\[\Syl(f,g)=
\begin{bmatrix}
a_n&a_{n-1}&.&.&.&a_0&0&.&\dots&.\\
0&a_n&a_{n-1}&.&.&.&a_0&0&\dots&.\\
\hdotsfor{10}\\
.&.&0&a_n&a_{n-1}&.&.&.&\dots&a_0\\
b_m&b_{m-1}&.&.&b_0&0&.&.&\dots&.\\
0&b_m&b_{m-1}&.&.&b_0&0&.&\dots&.\\
\hdotsfor{10}\\
.&.&0&b_m&b_{m-1}&.&.&.&\dots&b_0\\
\end{bmatrix}.
\]
The resultant of two polynomials $f(x),g(x)\in S[x]$ is the determinant of the $\Syl(f,g)$, and is denoted by $\Res(f,g)$.
\end{defn}

\begin{thm}[Theorem 5.7, \cite{jac}] \label{res} Let $F$ be 
a field. If  $f(x)=\sum\limits_{k=0}^n a_k x^k$ and $g(x)=\sum\limits_{k=0}^m b_kx^k$ are two elements in $F[x]$, where $m$ and $ n$ are positive integers, then,  $\Res(f,g)=0$ if and only if either $a_n=0=b_m$ or $f(x)$ and $g(x)$ has a common factor of positive degree in $F[x]$.
\end{thm}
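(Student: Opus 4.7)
The plan is to interpret the resultant as the determinant of a linear map between polynomial spaces and then translate the vanishing of that determinant into polynomial divisibility. Concretely, I would introduce the $F$-linear map
\[
\Phi : F[x]_{<m} \oplus F[x]_{<n} \to F[x]_{<m+n}, \qquad (u,v) \mapsto uf + vg,
\]
where $F[x]_{<k}$ denotes polynomials of degree strictly less than $k$. Written in the standard monomial bases, the matrix of $\Phi$ is (up to transposition) exactly $\Syl(f,g)$, so $\Res(f,g) = 0$ is equivalent to $\Phi$ failing to be injective. Since $\dim\bigl(F[x]_{<m} \oplus F[x]_{<n}\bigr) = m+n = \dim F[x]_{<m+n}$, this reduces the problem to the following combinatorial statement: $\Res(f,g) = 0$ if and only if there exists $(u,v) \neq (0,0)$ with $\deg u < m$, $\deg v < n$, and $uf + vg = 0$.

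For the ``if'' direction, I would first dispose of the case $a_n = b_m = 0$ by inspecting $\Syl(f,g)$: the first column contains $a_n$ at position $(1,1)$ and $b_m$ at position $(m+1,1)$ as its only possibly nonzero entries, so this column vanishes and $\Res(f,g) = 0$ follows immediately. When $f$ and $g$ instead share a common factor $h$ of positive degree, writing $f = h f_1$ and $g = h g_1$ and setting $(u,v) = (g_1, -f_1)$ produces a nonzero element of $\ker \Phi$, because $\deg g_1 \leq \deg g - \deg h < m$ and $\deg f_1 \leq \deg f - \deg h < n$.

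For the converse, I would assume $\Res(f,g) = 0$ and that at least one of $a_n, b_m$ is nonzero, say $a_n \neq 0$, so that $\deg f = n$. Picking a nonzero $(u,v) \in \ker \Phi$, the relation $uf = -vg$ together with $v = 0$ would force $u = 0$; so $v \neq 0$ and $f \mid vg$. If $\gcd(f,g)$ were a nonzero constant in $F[x]$, then $f \mid v$, incompatible with $\deg v < n = \deg f$ and $v \neq 0$. Hence $f$ and $g$ share a factor of positive degree; the case $b_m \neq 0$ is symmetric. The step I expect to demand the most care is the bookkeeping of formal versus actual degrees in the presence of possibly vanishing leading coefficients, which is precisely what forces the disjunctive form of the conclusion and which the above argument must track closely when reading off a nontrivial kernel element of $\Phi$ from singularity of the Sylvester matrix.
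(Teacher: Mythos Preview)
The paper does not supply a proof of this statement; it is quoted as a preliminary result from Jacobson's \emph{Basic Algebra I} (Theorem~5.7 there) and used as a black box throughout Sections~3--5. There is therefore nothing in the paper to compare your argument against.

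That said, your proposal is correct and is essentially the classical proof. Identifying $\Res(f,g)$ with the determinant of the map $\Phi(u,v)=uf+vg$ between equidimensional polynomial spaces, and then converting a nontrivial kernel element into a common factor via unique factorization in the PID $F[x]$, is exactly how Jacobson argues. Your treatment of the degenerate case $a_n=b_m=0$ by observing that the first column of $\Syl(f,g)$ vanishes is clean, and the degree bookkeeping you flag as delicate is handled correctly: in the ``if'' direction the bounds $\deg g_1\le m-\deg h<m$ and $\deg f_1\le n-\deg h<n$ hold regardless of whether $a_n$ or $b_m$ vanish, and in the converse the hypothesis $a_n\ne 0$ is precisely what lets you invoke $\deg f=n$ to contradict $f\mid v$.
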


 Hence, if $f(x)$ and $g(x)$ are two non-constant polynomial in $F[x]$, where $F$ is algebraically closed field, then $\Res(f,g)=0$ if and only if $f(x)$ and $g(x)$ has a common root.

A function of the form $$f(x)=\sum_{n=0}^{\infty}c_nx^n$$ or, more generally, $$f(x)=\sum_{n=0}^{\infty}c_n(x-a)^n$$ is called an analytic function, where the domain of the function is an open subset $\mathcal{U}$ of $\mathbb{R}$ or $\mathbb{C}$, and $c_n$ are in $\mathbb{R}$ or $ \mathbb{C}$  \cite{rudin}.

Let $\mathcal{E}$ be a subset of $\mathbb{R}$, and let $f:\mathcal{E} \rightarrow \mathbb{R}$ be a function. If $a$ is an interior point of $\mathcal{E}$,  $f$ is said to be  real analytic at $a$, if there exists an open interval $(a-r,a+r)$ in $\mathcal{E}$ for some $r>0$, such that there exists a power series $\sum_{n=0}^{\infty} c_n (x-a)^n$ centered at $a$, which has radius of convergence greater or equal to $r$, and  converges to $f$ in $(a-r,a+r)$ \cite{tao}.
 \begin{thm}[Theorem 8.5, \cite{rudin}]\label{pow-uniq-rudin}
Suppose the series $\sum\limits_{n=0}^{\infty} a_nx^n$ and $\sum\limits_{n=0}^{\infty} b_n x^n$ converge in the segment $\mathcal{S}=(-R,R)$. Let $\mathcal{E}$ be the set of all $x\in \mathcal{S}$ at which $$\sum_{n=0}^{\infty} a_n x^n= \sum_{n=0}^{\infty} b_n x^n.$$ If $\mathcal{E}$ has a limit point in $\mathcal{S}$, then $a_n=b_n$ for $n=0,1,2, \dots$, and hence $\sum\limits_{n=0}^{\infty} a_n x^n= \sum\limits_{n=0}^{\infty} b_n x^n$ holds for all $x\in \mathcal{S}$.
\end{thm}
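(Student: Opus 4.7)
The plan is to reduce the claim to a single-series uniqueness statement and then run the classical clopen (open-and-closed) connectedness argument on the interval $\mathcal{S}$. Set $c_n = a_n - b_n$ and $f(x) = \sum_{n=0}^{\infty} c_n x^n$. Then $f$ converges on $\mathcal{S} = (-R, R)$ and its zero set in $\mathcal{S}$ contains $\mathcal{E}$, hence has a limit point in $\mathcal{S}$. It suffices to show $f \equiv 0$ on $\mathcal{S}$, because term-by-term differentiation inside the radius of convergence then yields $c_n = f^{(n)}(0)/n! = 0$, i.e., $a_n = b_n$ for every $n$.

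Let $A \subseteq \mathcal{S}$ be the set of points of $\mathcal{S}$ that are limit points of the zero set of $f$; by hypothesis $A \neq \emptyset$. I would first check that $\mathcal{S} \setminus A$ is open: if $x_0 \notin A$, there is a neighborhood $U$ of $x_0$ meeting the zero set of $f$ in at most $\{x_0\}$, and one verifies directly that every point of $U$ then lies in $\mathcal{S} \setminus A$. The harder step is showing $A$ is open. Given $x_0 \in A$, continuity of $f$ forces $f(x_0) = 0$. Re-expand the power series around $x_0$: since a convergent power series is analytic at every interior point of its interval of convergence, there is a neighborhood on which $f(x) = \sum_{n \geq 0} d_n (x - x_0)^n$. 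If some $d_k \neq 0$, let $k$ be the smallest such index; then $f(x) = (x - x_0)^k g(x)$ with $g$ continuous and $g(x_0) = d_k \neq 0$, so $g$ is nonzero on a neighborhood of $x_0$, making $x_0$ an isolated zero of $f$ and contradicting $x_0 \in A$. Hence every $d_n = 0$, so $f \equiv 0$ on an open neighborhood $V$ of $x_0$, and each point of $V$ is automatically a limit point of the zero set of $f$, giving $V \subseteq A$.

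Since $\mathcal{S}$ is connected and decomposes as the disjoint union of the open sets $A$ and $\mathcal{S} \setminus A$ with $A \neq \emptyset$, we conclude $A = \mathcal{S}$. In particular $f$ vanishes on a neighborhood of $0$, so all $c_n = 0$, which proves the theorem. The main obstacle is the re-expansion step: one needs the lemma that a convergent power series may be re-centered at any interior point of its interval of convergence, together with explicit control of the new coefficients. This itself rests on term-by-term differentiation of power series inside the radius of convergence, or equivalently on an Abel-type rearrangement of the absolutely convergent double series obtained from $\sum c_n \bigl((x - x_0) + x_0\bigr)^n$.
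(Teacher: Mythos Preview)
The paper does not supply its own proof of this statement: it is quoted verbatim as a preliminary result from Rudin (Theorem 8.5 of \cite{rudin}) and used later as a black box. Your argument is correct and is, in fact, exactly Rudin's proof: reduce to $f=\sum c_n x^n$ with $c_n=a_n-b_n$, let $A$ be the set of limit points in $\mathcal{S}$ of the zero set, show $A$ and $\mathcal{S}\setminus A$ are both open (the latter because limit-point sets are closed, the former via the local re-expansion $f(x)=\sum d_n(x-x_0)^n$ and the isolated-zero dichotomy), and invoke connectedness of $\mathcal{S}$. So there is nothing to compare against in the paper itself, but your proposal matches the source it cites.
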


Let $M^{\mathbb{R}}_{n\times n}(\mathbb{C})$ denote the set of all matrices having real eigenvalues.
Let $A(t)$ be a family of matrices such that the entries are smoothly  depend on a parameter $t$, for $t \in [a,b]$.  The following theorem is a counter part of Schur's lemma for the matrices $A(t)$ whose entries are analytic functions.
\begin{thm}[ Theorem 1.1 ,\cite{ging}]\label{a9}
Let $A(t)$ be an $n\times n$ matrix function with analytic entries on $[a,b]$, where $-\infty \leq a < b \leq \infty$. If $A(t) \in M^{\mathbb{R}}_{n\times n}(\mathbb{C}) $ for each  $t \in [a,b]$, then there exist an unitary matrix function $U(t)$, which is analytic on $[a,b]$,  such that $$Q(t)=U^{-1}(t)A(t)U(t),$$ where $Q(t)$ is an upper-triangular matrix whose entries are analytic functions of $t$ on $[a,b]$.
\end{thm}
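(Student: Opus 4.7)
The plan is to prove this by induction on $n$, essentially adapting the proof of Schur's triangularization theorem to the analytic setting. The base case $n=1$ is immediate since every $1 \times 1$ matrix function is already upper triangular, and the only constraint is that its single eigenvalue be analytic, which follows from the assumption on $A(t)$.

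For the inductive step, I would first establish the analytic selection of a single eigenvalue. Since all eigenvalues of $A(t)$ are real and the characteristic polynomial $p(t,\lambda) = \det(\lambda I - A(t))$ has coefficients that are analytic in $t$, its roots admit local Puiseux expansions at each $t_0 \in [a,b]$. The hypothesis that every branch is real-valued on a real interval forces the fractional exponents in these expansions to be integral and the coefficients to be real; this is where the assumption $A(t) \in M^{\mathbb{R}}_{n \times n}(\mathbb{C})$ is essential. Gluing these local analytic roots along the connected interval $[a,b]$ produces an analytic eigenvalue function $\lambda_1(t)$ on all of $[a,b]$.

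Next I would construct an analytic unit eigenvector $v_1(t)$ for $\lambda_1(t)$. Locally near each $t_0$, the spectral projection onto the eigenspace of $\lambda_1(t)$ can be written as a contour integral $P(t) = \frac{1}{2\pi i}\oint (zI - A(t))^{-1}\, dz$ over a small circle separating $\lambda_1(t_0)$ from the other eigenvalues; this $P(t)$ is analytic in $t$, and applying it to a suitable constant vector and normalizing yields a local analytic eigenvector. Patching these local choices along $[a,b]$ (adjusting by analytic unit scalars at overlaps) gives a globally analytic unit eigenvector $v_1(t)$. I would then extend $v_1(t)$ to an analytic orthonormal frame $\{v_1(t),\dots,v_n(t)\}$ by performing Gram--Schmidt on $v_1(t)$ together with $n-1$ suitably chosen constant vectors, producing an analytic unitary $U_1(t) = [v_1(t)\mid \cdots \mid v_n(t)]$ for which
\[
U_1^{-1}(t) A(t) U_1(t) = \begin{pmatrix} \lambda_1(t) & \ast(t) \\ 0 & B(t) \end{pmatrix},
\]
where $B(t)$ is an analytic $(n-1)\times(n-1)$ matrix function whose spectrum consists of the remaining eigenvalues of $A(t)$, hence still real. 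The inductive hypothesis produces an analytic unitary $V(t)$ triangularizing $B(t)$, and setting $U(t) = U_1(t)\, \diag(1, V(t))$ finishes the proof.

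The main obstacle will be the globally analytic selection of the eigenvector at points where $\lambda_1(t)$ collides with a different eigenvalue branch. There the eigenspace can jump in dimension, so a naive normalization of a locally chosen vector will typically fail to extend analytically across such crossings; the key technical input is the analytic behavior of the spectral projection $P(t)$ associated with $\lambda_1(t)$, which remains analytic even when the eigenvalue ceases to be simple, provided $\lambda_1(t)$ can be isolated from the other branches on small contours. The realness hypothesis rules out the pathological complex Puiseux branchings that would prevent such an isolation, and this is ultimately why the analytic Schur form exists in this setting while it can fail for general analytic families with complex eigenvalue crossings.
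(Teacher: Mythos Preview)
The paper does not prove this theorem; it is quoted verbatim as Theorem~1.1 of \cite{ging} in the preliminaries and then invoked as a black box in Section~4. There is therefore no proof in the paper to compare your proposal against.

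For what it is worth, your inductive outline is the standard strategy in the literature (Kato \cite{kato}, Gingold \cite{ging}). The step you yourself flag as the obstacle---analytic selection of an eigenvector through a point $t_0$ where $\lambda_1$ meets another eigenvalue branch---is indeed the crux, and your sketch does not quite close it. At such a $t_0$ there is no small circle enclosing $\lambda_1(t_0)$ alone, so the rank-one Riesz projection $P(t)$ you write down is not even defined at $t_0$, let alone analytic there; saying that $P(t)$ ``remains analytic \ldots\ provided $\lambda_1(t)$ can be isolated from the other branches on small contours'' begs the question, since at the crossing it cannot. The usual remedy is to encircle the entire eigenvalue cluster near $t_0$, obtain an analytic \emph{total} projection of rank $\geq 2$, pass to the reduced problem on that analytically varying invariant subspace, and argue (again using realness of the spectrum to rule out fractional Puiseux exponents) that an analytic eigenvector can still be extracted from the cluster. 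A second minor gap is the Gram--Schmidt step: completing $v_1(t)$ to an analytic orthonormal frame using fixed constant vectors can fail at isolated $t$ where $v_1(t)$ falls into the span of fewer of them, and one has to patch or re-choose the constants to get a global frame on all of $[a,b]$. Both issues are handled in \cite{ging,kato}, which is why the present paper simply cites the result.
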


Diagonal entries of $Q(t)$ are called the analytic eigenvalues of $A(t)$.
A singular value decomposition of a matrix $A$ in $M_{m\times n}(\mathbb{C})$ is a factorization $A=U\Sigma V^\ast$, where $U$ is an $m\times m$ unitary matrix, V is an $n\times n$ unitary matrix and $\Sigma=\diag(s_1,s_2,\dots,s_n)$ is an $m\times n$ diagonal matrix, where $m \geq n$. The numbers $s_i$ are  called the singular values. They may be defined to be non negative and to be arranged in non increasing order.
\begin{defn}[Analytic singular value decomposition, \cite{sing}] For a real analytic matrix valued function $E(t):[a,b]\rightarrow M_{m \times n}(\mathbb{R})$, an analytic singular value decomposition is a path of factorization $$E(t)=X(t)S(t)Y(t)^T$$ where $X(t):[a,b]\rightarrow M_{m \times m} (\mathbb{R})$ is orthogonal, $S(t):[a,b]\rightarrow M_{m \times n}(\mathbb{R})$ is diagonal, $Y(t): [a,b]\rightarrow M_{n \times n} (\mathbb{R})$ is orthogonal and $X(t), S(t)$ and $Y(t)$ are analytic.
\end{defn}

Diagonal entries $s_i(E(t))$ of $S(t)$ are called the analytic singular values. Due to the requirement of smoothness, singular values may be negative and also their ordering may be arbitrary.

Let $\mathcal{A}_{m, n}([a,b])$ denote the set of matrix functions $A(t)$ such that for each $t$, $ A(t) \in M_{m \times n}(\mathbb{R})$, and all the entries of $A(t)$ are  real analytic function on $[a,b]$.

\begin{thm}[ Theorem 1, \cite{sing}]\label{asvd}
If $E(t) \in \mathcal{A}_{m,n}([a,b])$, then there exists an analytic singular value decomposition on $[a,b]$.
\end{thm}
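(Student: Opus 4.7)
The plan is to reduce the analytic SVD to the analytic spectral decomposition of $E(t)^T E(t)$, which falls under Theorem~\ref{a9}, and then build the left factor $X(t)$ by hand. First form the $n\times n$ matrix function $M(t) := E(t)^T E(t)$, which is analytic, symmetric, and positive semidefinite on $[a,b]$. Since its eigenvalues are real, Theorem~\ref{a9} produces an analytic orthogonal $Y(t)$ with $Y(t)^T M(t) Y(t)$ upper triangular; symmetry of $M(t)$ forces this triangular factor to be a diagonal $D(t) = \diag(d_1(t),\dots,d_n(t))$ with each $d_i \geq 0$ analytic.

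Next, extract ``analytic singular values'' $s_i(t)$ satisfying $s_i(t)^2 = d_i(t)$. A real-analytic non-negative function that is not identically zero on $[a,b]$ has only isolated zeros, each of even order, so locally $d_i(t) = (t-t_0)^{2k}g_i(t)$ with $g_i(t_0)>0$, and the branch $s_i(t) = \pm(t-t_0)^k\sqrt{g_i(t)}$ is analytic; fixing a sign at one endpoint and continuing analytically yields a global analytic $s_i$ on $[a,b]$. This is precisely why the definition allows the $s_i$ to be negative. Set $Z(t) := E(t)Y(t)$; then $Z(t)^T Z(t) = D(t)$, so the columns $z_i(t)$ are analytic, mutually orthogonal, and satisfy $\|z_i(t)\| = |s_i(t)|$.

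Now construct the left singular vectors. Where $s_i(t)\neq 0$, set $x_i(t) := z_i(t)/s_i(t)$. At an isolated zero $t_0$ of $s_i$ of order $k$, the identity $\|z_i(t)\|^2 = s_i(t)^2$, combined with the observation that a sum of squares of real-analytic functions vanishes to exactly twice the minimum vanishing order of its summands, forces $z_i(t) = (t-t_0)^k w_i(t)$ with $w_i$ analytic and $\|w_i(t_0)\|\neq 0$; cancelling the common factor $(t-t_0)^k$ produces an analytic extension of $x_i$ with $\|x_i(t_0)\|=1$.

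The main obstacle is the orthogonal completion of $x_1(t),\dots,x_n(t)$ to an analytic orthogonal $m\times m$ matrix $X(t)$. This is needed whenever $m > n$ and also whenever some $s_i$ vanishes identically on a subinterval, in which case the corresponding column must be supplied from scratch. A naive Gram--Schmidt against a fixed reference frame can fail at points where the projection loses rank. The resolution is the standard fact that any analytic orthonormal $k$-frame on $[a,b]$ extends to an analytic orthonormal $m$-frame, proved by a covering-and-patching argument on the compact interval together with the analytic inverse function theorem. Once $X(t)$ is assembled and $S(t)$ is defined as the $m\times n$ diagonal matrix with entries $s_1(t),\dots,s_n(t)$, the factorization $E(t) = X(t)S(t)Y(t)^T$ follows directly from $Z(t) = E(t)Y(t)$ and the definition of the columns of $X(t)$.
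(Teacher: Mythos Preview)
The paper does not prove this theorem; it is quoted from \cite{sing} in the preliminaries section and used as a black box, so there is no argument in the paper to compare your proposal against.

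Taken on its own, your sketch has the right architecture but one concrete gap and one soft spot. The gap: Theorem~\ref{a9} as stated in the paper produces only an analytic \emph{unitary} $U(t)$, not a real orthogonal matrix, so invoking it for $M(t)=E(t)^{T}E(t)$ does not directly give $Y(t)\in O(n)$; for that you need the real-symmetric version (Rellich's theorem), which is not among the paper's stated preliminaries. The soft spot is the frame-completion step you correctly flag as the main obstacle: on a one-dimensional base it can indeed be done, but ``covering-and-patching with the analytic inverse function theorem'' hides the issue that local completions on overlapping intervals differ by analytic $O(m-k)$-valued transition maps which must themselves be trivialised analytically; the cleaner route in the literature is to obtain the missing columns by solving a linear ODE in the style of Kato. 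A minor point: a real-analytic $s_i$ that ``vanishes identically on a subinterval'' of $[a,b]$ must vanish on all of $[a,b]$ by the identity theorem, so that case is simply $s_i\equiv 0$.
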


The following theorem is an extended version of the preceding theorem for the set of all $m \times n$ complex matrices.

\begin{thm}[Theorem 3.1, \cite{gin}]\label{asvd1}
Let $E(t):[a,b]\rightarrow M_{m\times n}(\mathbb{C})$ be an $m\times n$ matrix function, not identically zero, with analytic entries on $[a,b]$, where $-\infty \leq a<b \leq \infty$. Then, $E(t)$ can be factored as $$E(t)=X(t)S(t)Y(t)^\ast,$$ where $X(t)$ and $Y(t)$ are unitary matrix functions, with all entries are analytic on $[a,b]$, of order  $m \times m$ and $n \times n$, respectively. For each $t$,  $S(t)$ is a diagonal matrix with diagonal entries and  $s_1(E(t)),s_2(E(t)),\dots,s_n(E(t))$ analytic on $[a,b]$.
\end{thm}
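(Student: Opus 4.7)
The plan is to reduce the statement to the analytic triangularization in Theorem~\ref{a9}, applied to the Hermitian square $H(t) := E(t)^{\ast} E(t)$. This is an $n \times n$ analytic matrix function that is positive semidefinite at every $t$, and hence lies in $M^{\mathbb{R}}_{n\times n}(\mathbb{C})$. Theorem~\ref{a9} then produces an analytic unitary $Y(t)$ on $[a,b]$ making $Y(t)^{\ast} H(t) Y(t)$ upper triangular; being also Hermitian, this matrix is forced to be real diagonal. Writing it as $\diag(\lambda_1(t),\dots,\lambda_n(t))$, each $\lambda_i$ is a non-negative real analytic function on $[a,b]$.

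Next I would build $S(t)$ by choosing real analytic square roots $s_i(t)$ of $\lambda_i(t)$. Either $\lambda_i \equiv 0$ (take $s_i \equiv 0$), or the zeros of $\lambda_i$ are isolated and of even order, and a consistent $\pm$-choice across consecutive zeros yields a real analytic $s_i$ with $s_i^2 = \lambda_i$; this is precisely why Theorem~\ref{asvd} allows negative singular values. Let $S(t)$ be the $m \times n$ matrix with $s_i(t)$ on the diagonal and zeros elsewhere, and put $Z(t) := E(t)Y(t)$. A direct computation yields $Z(t)^{\ast} Z(t) = Y(t)^{\ast} H(t) Y(t) = \diag(\lambda_i(t))$, so the columns $z_i(t)$ of $Z(t)$ are pairwise orthogonal with $\|z_i(t)\|^2 = s_i(t)^2$.

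To construct the analytic unitary $X(t) \in M_{m \times m}(\mathbb{C})$ with $X(t) S(t) = Z(t)$, for each $i$ with $s_i \not\equiv 0$ I set $x_i(t) := z_i(t)/s_i(t)$. The only candidate singularities are zeros $t_0$ of $s_i$, say of order $k$; then $\|z_i(t)\|^2$ has a zero of order $2k$ at $t_0$, and writing $z_i(t) = (t-t_0)^{\ell} u_i(t)$ with $u_i(t_0) \neq 0$ forces $\ell = k$, so $z_i/s_i$ extends analytically through $t_0$ as a unit vector. Thus each such $x_i$ is an analytic orthonormal column on $[a,b]$.

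The main obstacle I expect is completing this partial orthonormal frame to a full analytic orthonormal basis of $\mathbb{C}^m$, namely supplying the columns of $X(t)$ for indices with $s_i \equiv 0$ together with the $m - n$ extra columns needed when $m > n$. Locally near any $t_0 \in [a,b]$ this is a routine analytic Gram-Schmidt: augment the existing columns by suitable vectors from the standard basis, chosen so that the augmented matrix is invertible at $t_0$, and orthonormalize; the only divisions are by analytic scalars that stay bounded away from zero in a neighbourhood. To globalise on $[a,b]$, one covers the interval by finitely many such neighbourhoods and glues the completions by analytic unitary transition functions acting on the orthogonal complement of the already-fixed $x_i$'s; since this complement has constant dimension, it defines a trivial analytic sub-bundle of $[a,b] \times \mathbb{C}^m$ and therefore admits a global analytic orthonormal frame. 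Once $X(t)$ is assembled, $E(t) = X(t) S(t) Y(t)^{\ast}$ is immediate from $X(t) S(t) = Z(t) = E(t) Y(t)$ and unitarity of $Y(t)$.
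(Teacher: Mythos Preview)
The paper does not supply its own proof of this theorem: it is quoted as a preliminary result from \cite{gin}, and the only comment the paper adds is the remark immediately following the statement, namely that the columns of $X(t)$ and $Y(t)$ are the normalized eigenvectors of $E(t)E(t)^\ast$ and $E(t)^\ast E(t)$, respectively, with $s_i(E(t))^2$ equal to the analytic eigenvalues of $E(t)^\ast E(t)$. There is therefore no full argument in the paper to compare against.

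Your outline is consistent with that remark and is essentially the standard route to analytic SVD. The reduction to Theorem~\ref{a9} for $H(t)=E(t)^\ast E(t)$, the extraction of real analytic square roots of the non-negative analytic eigenvalues (isolated zeros of even order, hence finitely many on $[a,b]$), and the normalization $x_i=z_i/s_i$ together with its removable-singularity argument are all correct. The one step that carries genuine content is the global analytic completion of the partial orthonormal system $\{x_i : s_i\not\equiv 0\}$ to a full unitary $X(t)$ on all of $[a,b]$. Your local Gram--Schmidt plus ``trivial analytic sub-bundle'' justification is the right idea, but be aware that it does real work: you are using that the orthogonal complement has \emph{constant} dimension (true, since the number of indices with $s_i\equiv 0$ is fixed on $[a,b]$) and that analytic vector bundles over an interval are trivial. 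If you prefer to avoid bundle language entirely, a direct finite-cover patching argument works because any two local completions differ by an analytic unitary acting only on the complement, and such transition maps over overlapping subintervals of $[a,b]$ can be globalized by successive composition.
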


Here the columns of $X(t)$ consist of normalized eigenvectors of $E(t)E(t)^\ast$, and the columns of $Y(t)$ consist of normalized eigenvectors of $E(t)^\ast E(t)$. Also, $e_i(t)=s_i(E(t))^2$, where $e_i(t)$ are the eigenvalues of $E(t)^\ast E(t)$, are analytic on $[a,b]$. The functions $s_i(E(t))$ are called the analytic singular values of $E(t)$.

\section{Dense subsets of matrices with distinct  eigenvalues and distinct singular values }\label{cls-eigen-sing}

In this section our main objectives are the following:  Given  $\Omega$, a subset of $M_{n\times n}( \mathbb{C})$, we prove some of the results about the existence of  dense subsets $\Omega_s$ of $\Omega$ such that all the matrices in $\Omega_s$ have distinct eigenvalues. For $m \geq n $, we consider the counterpart of these problem for the singular values.  To facilitate understanding, we divide this section into two subsections, one for the results about the eigenvalues and others for the results about the singular values.

\subsection{ Eigenvalue case}\label{sec-eig-clas}

	For a function $F:\mathcal{D}\rightarrow M_{n\times n}(\mathbb{C})$, let us define $\mathcal{Z}(\mathcal{D})=\{x\in \mathcal{D} : F(x) \mbox{ has repeated eigenvalues} \}$, where $\mathcal{D}$ is a subset of $\mathbb{C}$. In the following theorem, we prove that if $\mathcal{D}$ is an open, connected subset of $\mathbb{C}$ and the entries of $F$ are analytic functions on $\mathcal{D}$, then either $\mathcal{Z}(\mathcal{D})=\mathcal{D}$ or $\mathcal{Z}(\mathcal{D})$ has no limit points.

\begin{thm}\label{a4}
    Let $\mathcal{D}$ be an open connected subset of $\mathbb{C}$, and $F:\mathcal{D} \rightarrow M_{n \times n} (\mathbb{C})$ be a  function whose entries are analytic functions on $\mathcal{D}$.   Then, either $\mathcal{Z}(\mathcal{D})=\mathcal{D}$ or $\mathcal{Z}(\mathcal{D})$ has no limit points.

\end{thm}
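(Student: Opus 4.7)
The plan is to realize $\mathcal{Z}(\mathcal{D})$ as the zero set of a single analytic function on $\mathcal{D}$ and then apply the identity theorem for analytic functions of one complex variable on a connected domain.

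First, I would write $F(x) = (f_{ij}(x))$ with each $f_{ij}: \mathcal{D} \to \mathbb{C}$ analytic, and form the characteristic polynomial in the indeterminate $\lambda$,
\[
p_x(\lambda) \;=\; \det(\lambda I - F(x)) \;=\; \lambda^n + c_{n-1}(x)\lambda^{n-1} + \cdots + c_1(x)\lambda + c_0(x).
\]
Each coefficient $c_k(x)$ is a polynomial in the entries $f_{ij}(x)$, so $c_k$ is analytic on $\mathcal{D}$. The derivative $p_x'(\lambda)$ (differentiation in $\lambda$) has leading coefficient $n \neq 0$ and its coefficients are likewise analytic in $x$.

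Next I would define
\[
h(x) \;=\; \Res\bigl(p_x(\lambda),\, p_x'(\lambda)\bigr),
\]
the $\lambda$-resultant of the two polynomials. Since $\Res$ is a polynomial expression in the coefficients of $p_x$ and $p_x'$ (the Sylvester determinant), and those coefficients are analytic in $x$, the map $h:\mathcal{D}\to\mathbb{C}$ is analytic. By Theorem \ref{res} applied over the algebraically closed field $\mathbb{C}$, and using that the leading coefficients of $p_x$ and $p_x'$ are the nonzero constants $1$ and $n$, we get $h(x)=0$ if and only if $p_x$ and $p_x'$ have a common root in $\mathbb{C}$, which in turn is equivalent to $p_x$ having a repeated root, i.e.\ $F(x)$ having a repeated eigenvalue. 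Hence
\[
\mathcal{Z}(\mathcal{D}) \;=\; \{\, x\in\mathcal{D} : h(x)=0 \,\}.
\]

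Finally, I would invoke the classical identity theorem for analytic functions on an open connected subset of $\mathbb{C}$: if $h$ is analytic on the connected open set $\mathcal{D}$ and the zero set of $h$ has a limit point in $\mathcal{D}$, then $h$ is identically zero on $\mathcal{D}$. Applied to our $h$, this yields the dichotomy claimed in the theorem: either $h\equiv 0$ on $\mathcal{D}$, whence $\mathcal{Z}(\mathcal{D})=\mathcal{D}$, or $h\not\equiv 0$, in which case $\mathcal{Z}(\mathcal{D})=h^{-1}(0)$ has no limit points in $\mathcal{D}$. There is no serious obstacle here; the only step requiring care is checking that the leading coefficient hypothesis of Theorem \ref{res} is met (so that the resultant faithfully detects a common root), and that analyticity of $h$ follows automatically because it is a polynomial in the analytic coefficients $c_k(x)$.
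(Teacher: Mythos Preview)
Your proposal is correct and follows essentially the same route as the paper's proof: form the characteristic polynomial, take the resultant $\Res(p_x,p_x')$ as a single analytic function of $x$, and apply the identity theorem on the connected domain $\mathcal{D}$. If anything, you are slightly more explicit than the paper in noting that the leading coefficients $1$ and $n$ are nonzero (so the resultant genuinely detects a common root via Theorem~\ref{res}).
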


\begin{proof}
    Let $$p_x(y)=\det \left( yI-F(x) \right)=y^n+ \sum_{k=1}^n h_k(x)y^{n-k}.$$ Then, $p_x(y)$ is a polynomial in $y$,  and $h_k(x)$ is analytic, for each $1  \leq k  \leq n$. For each fixed $x\in \mathcal{D}$, the eigenvalues of the matrix $F(x)$ are the roots of the polynomial $p_x(y)=0$. So, if $p_x(y)=0$ has multiple roots, then  $F(x)$ has repeated eigenvalues.  Now, by Theorem \ref{res}, $p_x(y)=0$ has multiple roots if and only if $\Res(p_x(y),p'_x(y))=0$, where $p'_x(y)=ny^{n-1} + \sum\limits_{k=1}^{n-1}(n-k)h_k(x)y^{n-k-1}.$ Also, it is easy to see that, $\Res(p_x(y),p'_x(y))$ is an analytic function of $x$ on $\mathcal{D}$. Hence, the zero set of  $\Res(p_x(y),p'_x(y))$ is either $ \mathcal{D}$ or an isolated subset of $\mathcal{D}$.

    If $\Res(p_x(y),p'_x(y))=0$ for all $x\in \mathcal{D}$, then the polynomial  $p_x(y)=0$ has multiple roots for all $x \in \mathcal{D}$. Thus, all matrices in $F(\mathcal{D})$ has repeated eigenvalues, so $\mathcal{Z}(\mathcal{D})=\mathcal{D}$. If the zero set of $\Res(p_x(y),p'_x(y))=0$ is isolated in  $ \mathcal{D}$, then the set $\mathcal{Z}(\mathcal{D})$ has no limit point.
\end{proof}

In the following theorem, we show that if  $\mathcal{D}$ is an open interval in $\mathbb{R}$ and the entries of the function $F$ are analytic functions on $\mathcal{D}$, then the conclusion of the above result holds  true for $\mathcal{Z}(\mathcal{D})$.

\begin{thm}
    Let $\mathcal{D}$ be an open interval in $\mathbb{R}$, and $F:\mathcal{D} \rightarrow M_{n \times n} (\mathbb{C})$ be a  function whose entries are analytic functions on $\mathcal{D}$.   Then, either $\mathcal{Z}(\mathcal{D})=\mathcal{D}$ or $\mathcal{Z}(\mathcal{D})$ has no limit points.
\end{thm}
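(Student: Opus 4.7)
The plan is to mirror the proof of Theorem \ref{a4}, replacing the use of the complex identity theorem with the real-analytic identity principle (Theorem \ref{pow-uniq-rudin}) combined with connectedness of the open interval $\mathcal{D}$.

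First I would form the characteristic polynomial
\[
p_x(y) = \det(yI - F(x)) = y^n + \sum_{k=1}^n h_k(x)\,y^{n-k},
\]
and note that each $h_k$ is real analytic on $\mathcal{D}$ since it is a polynomial in the analytic entries of $F(x)$. Writing $R(x) := \Res(p_x(y), p_x'(y))$, the resultant is a polynomial in the coefficients $h_k(x)$ and in $n$, hence $R$ is real analytic on $\mathcal{D}$. By Theorem \ref{res}, $F(x)$ has a repeated eigenvalue if and only if $p_x$ has a multiple root, if and only if $R(x)=0$. Therefore $\mathcal{Z}(\mathcal{D}) = \{x \in \mathcal{D} : R(x) = 0\}$, and the statement reduces to the claim that the zero set of the real analytic function $R$ on the open interval $\mathcal{D}$ is either all of $\mathcal{D}$ or has no limit point in $\mathcal{D}$.

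To establish this real-analytic identity principle, I would argue as follows. Suppose $\mathcal{Z}(\mathcal{D})$ has a limit point $a \in \mathcal{D}$. Since $R$ is real analytic at $a$, there exists $r>0$ with $(a-r,a+r) \subseteq \mathcal{D}$ on which $R(x) = \sum_{n\geq 0} c_n (x-a)^n$. The series $\sum_{n\geq 0}c_n(x-a)^n$ and the identically zero series both converge on $(-r,r)$ (after translating), and they agree on a set with limit point $0$ (namely the translate of $\mathcal{Z}(\mathcal{D})\cap (a-r,a+r)$). By Theorem \ref{pow-uniq-rudin}, all $c_n$ vanish, so $R \equiv 0$ on $(a-r,a+r)$.

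Finally, I would promote this local vanishing to global vanishing via connectedness. Let $S = \{x \in \mathcal{D} : R \text{ vanishes on some open neighborhood of } x\}$. Then $S$ is open by definition and nonempty by the previous paragraph. It is also closed in $\mathcal{D}$: if $x_k \in S$ and $x_k \to x \in \mathcal{D}$, then $R$ vanishes in neighborhoods of the $x_k$, producing a set of zeros with limit point $x$, and the argument of the previous paragraph applied at $x$ shows $R \equiv 0$ in a neighborhood of $x$, whence $x \in S$. Since $\mathcal{D}$ is connected, $S = \mathcal{D}$, so $R \equiv 0$ on $\mathcal{D}$ and $\mathcal{Z}(\mathcal{D}) = \mathcal{D}$. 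The main (and really the only) obstacle is invoking the correct form of the identity theorem for real analytic functions on an open interval; Theorem \ref{pow-uniq-rudin} supplies this locally, and connectedness of $\mathcal{D}$ handles the global step.
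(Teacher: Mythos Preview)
Your proposal is correct and follows essentially the same approach as the paper: form the characteristic polynomial, reduce to the vanishing of the resultant $R(x)$, and invoke the real-analytic identity theorem (Theorem \ref{pow-uniq-rudin}) in place of the complex identity theorem used in Theorem \ref{a4}. The paper's own proof is simply ``similar to that of Theorem \ref{a4} (Using Theorem \ref{pow-uniq-rudin})''; your write-up spells out the local-to-global step via connectedness of $\mathcal{D}$, which the paper leaves implicit.
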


\begin{proof}
    The proof is similar to that of Theorem \ref{a4} (Using  Theorem \ref{pow-uniq-rudin}).
\end{proof}

The next theorem is analogous of Theorem \ref{a4}. Here we consider the entries of the function $F$ are polynomials, and $\mathcal{D}$ is an arbitrary subset of $\mathbb{C}$. The idea of the proof is similar to that of in \cite[Theorem 1]{hart}. This theorem is vital to prove some of the theorems of this section.

\begin{thm}\label{pd}
    Let $F:\mathcal{D} \subseteq \mathbb{C} \rightarrow M_{n \times n} (\mathbb{C})$ be a function defined by $F(x)=\sum\limits_{k=0}^p A_k x^k,$ where $A_k \in M_{n \times n}(\mathbb{C})$ for $k=0,1,\dots,p$. Then, either $\mathcal{Z}(\mathcal{D})=\mathcal{D}$ or $\mathcal{Z}(\mathcal{D})$ is finite.
\end{thm}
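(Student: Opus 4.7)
The plan is to mimic the strategy of Theorem \ref{a4}, replacing ``analytic in $x$'' with ``polynomial in $x$'' throughout, which is what lets us upgrade ``isolated'' to ``finite''.

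First I would form the characteristic polynomial of $F(x)$ in the indeterminate $y$:
\[
p_x(y)=\det(yI-F(x))=y^n+\sum_{k=1}^n h_k(x)y^{n-k}.
\]
Since each entry of $F(x)$ is a polynomial in $x$ (the $(i,j)$ entry is a polynomial of degree at most $p$), every $h_k(x)$ is a polynomial in $x$, being a sum of products of such entries. The key point is that $p_x(y)$ is monic in $y$, so its degree in $y$ does not drop with $x$; likewise $p'_x(y)$ (differentiation with respect to $y$) has leading coefficient $n\neq 0$.

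Next I would introduce the resultant $R(x):=\Res(p_x(y),p'_x(y))$. Because the entries of the Sylvester matrix are polynomials in $x$, and the determinant is a polynomial function of its entries, $R(x)$ is itself a polynomial in $x$. By Theorem \ref{res} (applied over the algebraically closed field $\mathbb{C}$), $p_x(y)$ has a repeated root if and only if $R(x)=0$. Since the eigenvalues of $F(x)$ are exactly the roots of $p_x(y)$, we obtain
\[
\mathcal{Z}(\mathcal{D})=\{x\in\mathcal{D}:R(x)=0\}.
\]

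Finally I would split into two cases according to whether $R$ is the zero polynomial. If $R\equiv 0$, then every $x\in\mathcal{D}$ gives a matrix with repeated eigenvalues, so $\mathcal{Z}(\mathcal{D})=\mathcal{D}$. Otherwise $R$ is a nonzero polynomial in a single variable over $\mathbb{C}$, hence has only finitely many roots in $\mathbb{C}$, so $\mathcal{Z}(\mathcal{D})\subseteq\{x\in\mathbb{C}:R(x)=0\}$ is finite. I do not anticipate any serious obstacle: the only mild subtlety is confirming that the resultant is genuinely a polynomial in $x$ (and that Theorem \ref{res} applies without the degeneracy clause $a_n=b_m=0$, which is ruled out by the monicity of $p_x(y)$), but both are immediate from the construction above.
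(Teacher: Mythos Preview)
Your proposal is correct and follows essentially the same approach as the paper: form the characteristic polynomial $p_x(y)$, observe that its coefficients are polynomials in $x$, take the resultant $\Res(p_x,p'_x)$ as a polynomial in $x$, and split according to whether this polynomial is identically zero. Your explicit remark that monicity of $p_x(y)$ prevents the degenerate case $a_n=b_m=0$ in Theorem~\ref{res} is a small clarification that the paper leaves implicit.
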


\begin{proof}
    It is easy to see that, $(i,j)$th entry of the matrix $F(x)$   is $\sum\limits_{k=0}^p a_{ij}^{(k)} x^k$, where $a_{ij}^{(k)}$ is the $(i,j)$th entry of the matrix $A_k$ for $i,j=1,2,\dots,n$.

    Set $$p_x(y)=\det \left( yI-F(x) \right)=y^n+ \sum_{k=1}^n q_k(x)y^{n-k}.$$ Then $p_x(y)$ is a polynomial in $y$, and each coefficient $q_k(x)$ is a polynomial in $x$. The eigenvalues of $F(x)$ are the roots of the equation  $p_x(y)=0$ for each $x\in \mathcal{D}.$

    Now, the matrix $F(x)$ has repeated eigenvalues if and only if the polynomial $p_x(y)$ has repeated roots. By Theorem \ref{res}, $p_x(y)=0$ has repeated roots if $\Res(p_x(y),p'_x(y))=0$, where $p'_x(y)=ny^{n-1} + \sum\limits_{k=1}^{n-1}(n-k)q_k(x)y^{n-k-1}$. As $\Res(p_x(y),p'_x(y))$ is a polynomial in $x$, hence, the zero set of  $\Res(p_x(y),p'_x(y))$ is either $ \mathcal{D}$ or a finite subset of $\mathcal{D}$.

    If $\Res(p_x(y),p'_x(y))=0$ for all $x\in \mathcal{D}$, then the polynomial  $p_x(y)=0$ has multiple roots for all $x \in \mathcal{D}$. Thus, all the matrices in $F(\mathcal{D})$ has repeated eigenvalues, so $\mathcal{Z}(\mathcal{D})=\mathcal{D}$. If the zero set of $\Res(p_x(y),p'_x(y))$ is finite in  $ \mathcal{D}$, then $\mathcal{Z}(\mathcal{D})$ is finite too.\end{proof}

In the following lemma, we show that if the closure of a subset of the set of all $n\times n$ complex matrices contains at least one matrix whose eigenvalues are distinct, then the subset also contains matrices whose eigenvalues are distinct.

\begin{lem}\label{dis}
Let $\Omega$ be a subset of  $M_{n \times n}(\mathbb{C})$, and $\Omega_d$ be the matrices in $\Omega$ having distinct eigenvalues. If $\overline{\Omega}$ contains at least one matrix with distinct eigenvalues, then $\Omega_d$ is non empty.
\end{lem}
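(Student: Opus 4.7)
The plan is to read off the result as an immediate consequence of Theorem \ref{closed}, which asserts that the set of matrices in $M_{n\times n}(\mathbb{C})$ with a repeated eigenvalue is closed. Let $\mathcal{M}$ denote this closed set, and let $\mathcal{D} = M_{n\times n}(\mathbb{C}) \setminus \mathcal{M}$ be the complementary open set, consisting exactly of matrices with $n$ distinct eigenvalues.

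The hypothesis gives a matrix $A \in \overline{\Omega}$ with $A \in \mathcal{D}$. Since $\mathcal{D}$ is open, I would produce an $\epsilon > 0$ with $\mathrm{B}(A;\epsilon) \subseteq \mathcal{D}$. Now invoke the definition of closure: since $A \in \overline{\Omega}$, the ball $\mathrm{B}(A;\epsilon)$ meets $\Omega$; pick any $B \in \mathrm{B}(A;\epsilon) \cap \Omega$. Then $B \in \Omega$ and $B \in \mathcal{D}$, so $B$ has distinct eigenvalues, hence $B \in \Omega_d$, proving $\Omega_d \neq \emptyset$.

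There is no real obstacle here; the only thing that needs citing is Theorem \ref{closed} (openness of the distinct-eigenvalue locus). Alternatively, one could argue directly by continuity of the discriminant $\Res(p_x(y),p_x'(y))$ of the characteristic polynomial as a polynomial in the entries of the matrix (as used in Theorems \ref{a4} and \ref{pd}), but Theorem \ref{closed} already packages this fact. The lemma is essentially the observation that ``$\Omega_d$ nonempty'' is a property witnessed on any open neighborhood of a distinct-eigenvalue matrix, and such a neighborhood must intersect $\Omega$ whenever that matrix lies in $\overline{\Omega}$.
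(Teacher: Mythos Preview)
Your proof is correct and follows essentially the same approach as the paper's: both arguments hinge on Theorem \ref{closed} to conclude that the set of matrices with distinct eigenvalues is open, and then use that a point of $\overline{\Omega}$ must be approximated by points of $\Omega$. The paper phrases this via a convergent sequence $\{A_m\}\to A$ (if every $A_m$ had repeated eigenvalues, so would $A$), while you phrase it via an open ball around $A$ contained in the distinct-eigenvalue locus; these are equivalent in this metric setting.
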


\begin{proof}
If $\Omega$ contains a matrix with distinct eigenvalues, then $\Omega_d$ is non-empty.
Let $A$ be a limit point of $\Omega$ whose eigenvalues are distinct. Then there exists a sequence $\{A_m\}$ in $\Omega$ such that $\lim\limits_{m\rightarrow \infty} A_m= A$. Now, if all the matrices $A_m$ have repeated eigenvalues, then, by Theorem \ref{closed},  $A$  has repeated eigenvalues. Hence, $\{A_m\}$ contains matrices having distinct eigenvalues. Hence $\Omega_d$ is non empty.
\end{proof}

Next lemma shows that, if the entries of the function $F$ are polynomials, then either all matrices in $F(\mathcal{D})$ are singular or there are finite number of matrices in $F(\mathcal{D})$ which are singular. To avoid ambiguities, let us assume the set $\mathcal{D}$ is an infinite set.

\begin{lem}\label{zr}
	Let $F: \mathcal{D}\subseteq \mathbb{C} \rightarrow M_{n\times n}(\mathbb{C})$ be a function defined by  $F(x)=\sum\limits_{k=0}^p A_k x^k$, where $A_k \in M_{n \times n}(\mathbb{C})$. Then,  either all the matrices in $F(\mathcal{D})$ are singular or only finitely many of them are singular.
\end{lem}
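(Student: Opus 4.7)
The plan is to reduce the statement to an elementary fact about zeros of a single-variable polynomial, namely that a nonzero polynomial in $\mathbb{C}[x]$ has only finitely many roots.

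First I would observe that, by hypothesis, each entry of $F(x)$ is a polynomial in $x$: namely $(F(x))_{ij} = \sum_{k=0}^{p} a_{ij}^{(k)} x^k$, where $a_{ij}^{(k)}$ denotes the $(i,j)$th entry of $A_k$. Since the determinant is a polynomial function of the entries of a matrix, the map $g : \mathcal{D} \to \mathbb{C}$ defined by $g(x) = \det F(x)$ is the restriction to $\mathcal{D}$ of a polynomial $\tilde{g}(x) \in \mathbb{C}[x]$ of degree at most $np$ (obtained by expanding the determinant via the Leibniz formula and collecting powers of $x$). A matrix $F(x)$ is singular if and only if $g(x)=0$.

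Now the dichotomy is natural: either $\tilde{g}$ is the zero polynomial, or it is not. In the first case, $g(x)=0$ for every $x \in \mathcal{D}$, so every matrix in $F(\mathcal{D})$ is singular. In the second case, $\tilde{g}$ has at most $\deg \tilde{g} \leq np$ roots in $\mathbb{C}$, hence the set $\{x \in \mathcal{D} : F(x) \text{ is singular}\}$ is finite; since the image of a finite set under $F$ is finite, only finitely many matrices in $F(\mathcal{D})$ are singular. The assumption that $\mathcal{D}$ is infinite simply rules out the degenerate case in which $F(\mathcal{D})$ is itself finite and the statement becomes vacuous.

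There is essentially no obstacle here: the entire content is that ``singularity of $F(x)$'' is detected by the vanishing of a single polynomial in $x$, to which the fundamental theorem of algebra applies. Compared with Theorem \ref{pd}, the argument is even cleaner, since we only need to analyze $\det F(x)$ rather than the resultant of the characteristic polynomial and its derivative.
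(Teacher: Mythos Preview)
Your proof is correct and follows essentially the same route as the paper: both arguments observe that $\det F(x)$ is a polynomial in $x$ (since each entry of $F(x)$ is), and then invoke the dichotomy that a polynomial is either identically zero or has only finitely many roots. Your write-up is in fact a bit more careful than the paper's, explicitly noting the degree bound and that the image under $F$ of a finite set is finite.
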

\begin{proof}
Let us rewrite the $(i, j)$th  entry of the matrix $F(x)$ as follows:  $ \sum\limits_{k=0}^p a_{ij}^{(k)} x^k$, where $a_{ij}^{(k)}$ is the $(i,j)$th entry of $A_k$ for $i,j=1,2,\dots,n$.
As $\det(F(x))$ is a polynomial in $x$,  hence either $\det(F(x))=0$ for all $x \in \mathcal{D}$ or $\det(F(x))=0$ for finitely many $x \in \mathcal{D}$. If $\det(F(x))=0$ for all $x$ in $\mathcal{D}$, then all the matrices in $F(\mathcal{D})$ are singular, and if $\det(F(x))=0$ for finite number of $x$ in $ \mathcal{D}$, then $F(\mathcal{D})$ is finite.
\end{proof}

The next theorem is a generalization of  \cite[Corrollary 1]{hart}.


 \begin{thm}\label{a8}
Let $\Omega$ be a convex subset of  $M_{n \times n}(\mathbb{C})$, and $\Omega_s$ be the set of all nonsingular matrices in $\Omega$ with distinct  eigenvalues. If $\overline{\Omega}$ contains at least one nonsingular matrix with distinct eigenvalues, then $\Omega_s$ is dense in $\Omega$.
\end{thm}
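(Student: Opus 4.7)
The plan is to mimic the convex interpolation argument of Hartfiel, but first upgrade the hypothesis from a matrix in $\overline{\Omega}$ to a matrix in $\Omega$ itself. By Theorem \ref{closed}, the set of matrices with repeated eigenvalues is closed, and since $\det$ is continuous, the set of singular matrices is closed. Hence $\mathcal{N} := \{M \in M_{n\times n}(\mathbb{C}) : M \text{ nonsingular with distinct eigenvalues}\}$ is open. If $B \in \overline{\Omega} \cap \mathcal{N}$, every neighborhood of $B$ meets $\Omega$, so in particular the open neighborhood $\mathcal{N}$ of $B$ meets $\Omega$. Therefore $\Omega_s = \Omega \cap \mathcal{N}$ is nonempty; fix some $B^{\prime} \in \Omega_s$.

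Next, given an arbitrary $A \in \Omega$, I would set up the linear interpolation $F:\mathbb{C} \to M_{n\times n}(\mathbb{C})$ by $F(x) = (1-x)A + x B^{\prime}$, which is a polynomial in $x$ of the form $A_0 + A_1 x$ required by Theorem \ref{pd} and Lemma \ref{zr}. Convexity of $\Omega$ ensures that $F(x) \in \Omega$ for every $x \in [0,1]$. Because $F(1) = B^{\prime}$ has distinct eigenvalues, Theorem \ref{pd} (applied with $\mathcal{D} = \mathbb{C}$) forces $\mathcal{Z}(\mathbb{C})$ to be finite rather than all of $\mathbb{C}$. Similarly, since $F(1) = B^{\prime}$ is nonsingular, Lemma \ref{zr} gives only finitely many $x \in \mathbb{C}$ for which $F(x)$ is singular.

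Combining these two finite exceptional sets, there are only finitely many $x \in [0,1]$ where $F(x)$ fails to lie in $\mathcal{N}$. Consequently, I can choose a sequence $x_k \in (0,1]$ with $x_k \to 0$ that avoids all exceptional values; then $F(x_k) \in \Omega \cap \mathcal{N} = \Omega_s$ and $F(x_k) \to F(0) = A$ in the Frobenius norm, witnessing $A \in \overline{\Omega_s}$. Since $A \in \Omega$ was arbitrary, $\Omega_s$ is dense in $\Omega$.

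There is no serious obstacle here; the only subtle point is the first step, where I must not assume directly that $B$ lies in $\Omega$. Lemma \ref{dis} is insufficient because it only provides a matrix in $\Omega$ with distinct eigenvalues, not necessarily a nonsingular one. The openness of $\mathcal{N}$ (which uses both Theorem \ref{closed} and the continuity of the determinant) is what lets me transfer the hypothesis on $\overline{\Omega}$ into an honest element of $\Omega_s$ to serve as the endpoint of the interpolation.
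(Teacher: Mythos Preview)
Your proof is correct, and the main interpolation argument (applying Theorem \ref{pd} and Lemma \ref{zr} to the segment $F(x)=(1-x)A+xB'$) is exactly what the paper does. The difference lies in how you establish $\Omega_s\neq\emptyset$. The paper extracts from a sequence $A_m\to A$ two possibly different terms, one nonsingular ($A_r$) and one with distinct eigenvalues ($A_s$), and then runs a \emph{second} interpolation $F(t)=(1-t)A_r+tA_s$ together with Theorem \ref{pd} and Lemma \ref{zr} to locate a single matrix in $\Omega_s$. Your argument is cleaner: you observe once and for all that $\mathcal{N}$ is open (intersection of two open sets), so any point of $\overline{\Omega}\cap\mathcal{N}$ immediately yields a point of $\Omega\cap\mathcal{N}=\Omega_s$. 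This bypasses the paper's auxiliary interpolation entirely and shows, as you note, why Lemma \ref{dis} alone would not suffice here.
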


\begin{proof}
Let $A$ be a nonsingular matrix in $\overline{\Omega}$ with distinct eigenvalues. Then, there exists a sequence $\{A_m\}$ in $\Omega$ such that $\{A_m\}$ converges to $A$. If all the matrices $A_m$ are singular, then $A$ is also singular. So the sequence $\{A_m\}$ contains nonsingular  matrices. By Theorem \ref{closed}, the sequence $\{A_m\}$ contains matrices whose eigenvalues are distinct. Let $A_r$ and $A_s$ be two matrices in the sequence $\{A_m\}$ such that $A_r$ is non-singular, and $A_s$ has distinct eigenvalues.

Set $F(t)=(1-t)A_r+tA_s$, where $0 \leq t \leq 1$. Now, $F([0,1])$ contains a non-singular matrix, and a matrix whose eigenvalues are distinct. Hence, by Theorem \ref{pd},  $F(t)$ has repeated eigenvalues only for finitely many $t^{'}s$ in $[0,1]$. By Lemma \ref{zr},  $F(t)$  is singular only for finitely many $t^{'}s$ in $ [0, 1]$. Thus $F([0,1])$ contains nonsingular matrices with distinct eigenvalues. Hence $\Omega_s \neq \phi$.

Let  $A \in \Omega$ and  $B \in \Omega_s$. Set $E(t)=(1-t)A+tB=t(B-A)+A$, for $0 \leq t \leq 1$. By Theorem \ref{pd},  $E(t)$ has repeated eigenvalues only for finitely many $t'$s,  and, by Lemma \ref{zr},  $E(t)$ is singular only for finitely many $t'$s.
Assume that  $E(t)$ has repeated eigenvalues for $t=t_1,t_2,\dots,t_p$, and  $E(t)$ is singular for for $t=t_{p+1},\dots,t_{p+q}$. Let $\mathcal{L}=\{t_i>0: i \in \{1,2,\dots,{p+q}\}\} $. If $\mathcal{L}$ is nonempty, then, define $s=\min \mathcal{L}$, otherwise choose $s$ to be any real number in the interval $(0,1)$.  Then, for any $t \in (0,s)$, the matrix $E(t)$ is nonsingular and has distinct eigenvalues.  Hence, for any $\epsilon >0$, the open ball $\mathrm{B}(A;\epsilon)$ has nonempty intersection with $\Omega_s$. As $A\in \Omega$ is arbitrary,  hence $\Omega_s$ is dense in $\Omega$.
\end{proof}

  The idea of the following theorem is to extend the idea of the previous theorem viz., instead convex combination of matrices, one can look at the arbitrary polynomial combination.

\begin{thm}\label{conv-gen-eig}
Let $\Gamma$ be a subset of $ M_{n \times n}(\mathbb{C}) $ such that, if  $A$ and $B$ are in  $\Gamma$, then there exists a polynomial $p(x)=\sum\limits_{i=0}^k A_i x^i $ defined on $[0,1]$ such that $p(0)=A , p(1)=B$ and $p([0,1]) \subset \Gamma$. Let $\Gamma_s$ be the set of all matrices in $\Gamma$ which are nonsingular and have distinct eigenvalues. Then, $\Gamma_s$ is dense in $\Gamma$ if and only if $\overline{ \Gamma}$ contains a nonsingular matrix whose eigenvalues are distinct.
\end{thm}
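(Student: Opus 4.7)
The plan is to mirror the proof of Theorem \ref{a8} step by step, replacing the convex segment $(1-t)A + tB$ by the polynomial arc provided by the hypothesis on $\Gamma$; since both Theorem \ref{pd} and Lemma \ref{zr} are stated for \emph{polynomial} matrix-valued maps on arbitrary (infinite) subsets of $\mathbb{C}$, they apply directly to such arcs parametrized on $[0,1]$. The ``only if'' direction is immediate: the hypothesis on $\Gamma$ is vacuous only when $\Gamma=\emptyset$, and if $\overline{\Gamma}$ contains a nonsingular matrix with distinct eigenvalues then in particular $\Gamma$ is nonempty; so if $\Gamma_s$ is dense in $\Gamma$ then $\Gamma_s$ is nonempty, and any $M\in\Gamma_s\subseteq\Gamma\subseteq\overline{\Gamma}$ is a nonsingular matrix with distinct eigenvalues in $\overline{\Gamma}$.

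For the nontrivial direction, I would first establish $\Gamma_s\neq\emptyset$. By hypothesis there is $C\in\overline{\Gamma}$ that is nonsingular with distinct eigenvalues, and a sequence $\{C_m\}\subseteq\Gamma$ with $C_m\to C$. Because the set of matrices with repeated eigenvalues is closed (Theorem \ref{closed}) and the set of singular matrices is closed (continuity of $\det$), not every $C_m$ can have repeated eigenvalues and not every $C_m$ can be singular. Choose $C_r\in\{C_m\}$ nonsingular and $C_s\in\{C_m\}$ with distinct eigenvalues. By the hypothesis on $\Gamma$, there is a polynomial $p(t)=\sum_{i=0}^{k}A_i t^i$ with $p(0)=C_r$, $p(1)=C_s$, and $p([0,1])\subseteq\Gamma$. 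Since $p(1)$ has distinct eigenvalues, Theorem \ref{pd} (applied with $\mathcal{D}=[0,1]$) excludes the case $\mathcal{Z}([0,1])=[0,1]$, so $p(t)$ has repeated eigenvalues for only finitely many $t$; likewise, since $p(0)$ is nonsingular, Lemma \ref{zr} says $p(t)$ is singular for only finitely many $t$. Hence $p(t)\in\Gamma_s$ for all but finitely many $t\in[0,1]$.

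To finish, fix an arbitrary $A\in\Gamma$, choose any $B\in\Gamma_s$ (available by the previous step), and let $q(t)=\sum_{i=0}^{\ell}B_it^i$ be a polynomial with $q(0)=A$, $q(1)=B$, $q([0,1])\subseteq\Gamma$. Since $q(1)\in\Gamma_s$, Theorem \ref{pd} and Lemma \ref{zr} again imply that the set $T\subseteq[0,1]$ of parameters for which $q(t)$ is singular or has repeated eigenvalues is finite. Let $s=\min(T\cap(0,1])$, or any point of $(0,1)$ if that intersection is empty; then $q(t)\in\Gamma_s$ for every $t\in(0,s)$. By continuity of $q$ at $0$, for each $\epsilon>0$ there exists $t_\epsilon\in(0,s)$ with $\|q(t_\epsilon)-A\|<\epsilon$, so $A\in\overline{\Gamma_s}$, proving density.

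The only conceptual point that requires care is that the polynomial paths supplied by the hypothesis actually match the setting of Theorem \ref{pd} and Lemma \ref{zr}; this is why those two results were phrased in the polynomial, rather than analytic, framework. Beyond that, the argument is a direct transcription of the proof of Theorem \ref{a8}, so I do not anticipate a substantive obstacle.
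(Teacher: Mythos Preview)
Your proposal is correct and follows essentially the same approach as the paper: the paper's proof simply notes that the ``only if'' direction is easy, invokes the argument of Theorem~\ref{a8} to obtain $\Gamma_s\neq\emptyset$, and then replaces the convex segment by the polynomial arc $p(x)$ joining $A\in\Gamma$ to $B\in\Gamma_s$, deferring the remaining details to the proof of Theorem~\ref{a8}. You have made those deferred details explicit (including the use of Theorem~\ref{pd} and Lemma~\ref{zr} on the polynomial arc and the continuity of $q$ at $0$), which is exactly what is intended.
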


\begin{proof}
Here only if condition is easy to verify. Now, if $\overline{\Gamma}$ contains a nonsingular matrix whose eigenvalues are distinct, then, by the proof of  Theorem \ref{a8}, it is clear that $\Gamma_s$ is nonempty. Let  $A$ be an element of $\Gamma$, and  $B$  be an element of $\Gamma_s$. Let $p(x)$ be a polynomial $\sum\limits_{i=0}^k A_i x^i$ in $[0,1]$ such that $p(0)=A$, $p(1)=B$ and $p([0,1])\subset \Gamma$. Rest of the proof to similar to that of Theorem \ref{a8}.
\end{proof}

\subsection{Singular value case}\label{subsec3.2}

In this section, we shall extend some of the results of section \ref{sec-eig-clas} for the singular values of matrices. For a function $F:\mathcal{U}\rightarrow M_{m\times n} (\mathbb{C}),$ let us define $\mathcal{Y}(\mathcal{U}) = \{x\in \mathcal{U} :  F(x)\mbox{ has repeated singular values}\}$, where $\mathcal{U}$ is a subset of $\mathbb{R}$. The next theorem is a counter part of Theorem \ref{a4} for the singular values of matrices.


\begin{thm}\label{a7}
Let $\mathcal{U}$ be an open interval in $\mathbb{R}$, and $F:\mathcal{U} \rightarrow M_{m \times n} (\mathbb{C})$ be a function whose entries are analytic functions on $\mathcal{U}$. Then, either $\mathcal{Y}(\mathcal{U})=\mathcal{U}$ or $\mathcal{Y}(\mathcal{U})$ has no limit points.
\end{thm}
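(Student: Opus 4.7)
The plan is to reduce the singular value case to the eigenvalue case already handled in the preceding theorem (the analog of Theorem \ref{a4} for open intervals in $\mathbb{R}$). The key observation is that for a matrix $A \in M_{m \times n}(\mathbb{C})$, the singular values of $A$ are precisely the non-negative square roots of the eigenvalues of $A^\ast A$, and the map $\lambda \mapsto \sqrt{\lambda}$ is injective on $[0,\infty)$. Hence $A$ has repeated singular values if and only if $A^\ast A$ has repeated eigenvalues.

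I would set $G(x) = F(x)^\ast F(x)$, which is an $n \times n$ matrix-valued function on $\mathcal{U}$. The first task is to verify that the entries of $G(x)$ are analytic on $\mathcal{U}$. The entries $f_{ij}(x)$ of $F(x)$ are (real-)analytic, complex-valued functions of the real variable $x$; locally around any $a \in \mathcal{U}$ we may write $f_{ij}(x)=\sum_{k=0}^\infty c_{ij,k}(x-a)^k$ with $c_{ij,k} \in \mathbb{C}$, and for real $x$ the conjugate becomes $\overline{f_{ij}(x)}=\sum_{k=0}^\infty \overline{c_{ij,k}}(x-a)^k$, which is a power series with the same radius of convergence. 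Thus the entries of $F(x)^\ast$ are analytic on $\mathcal{U}$; since sums and products of analytic functions are analytic, every entry of $G(x)=F(x)^\ast F(x)$ is analytic on $\mathcal{U}$.

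With analyticity of $G$ established, I would invoke the preceding theorem (the open-interval counterpart of Theorem \ref{a4}) applied to $G$, which asserts that either $\mathcal{Z}(\mathcal{U}) = \mathcal{U}$ or $\mathcal{Z}(\mathcal{U})$ has no limit points, where $\mathcal{Z}(\mathcal{U})$ is the set of $x \in \mathcal{U}$ at which $G(x)$ has repeated eigenvalues. By the equivalence above, $\mathcal{Z}(\mathcal{U}) = \mathcal{Y}(\mathcal{U})$, and the conclusion follows.

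The only genuine point to be careful about is the analyticity of $F(x)^\ast$, which I flagged above; the rest is a mechanical translation through the identity $\sigma_i(F(x))^2 = \lambda_i(F(x)^\ast F(x))$. Nothing in the argument requires $m=n$, so it applies to the rectangular setting of the theorem. (Alternatively, one could bypass citing the previous theorem and repeat the resultant argument directly: form $p_x(y)=\det(yI - G(x))$, observe that $\Res(p_x, p_x')$ is an analytic function of $x$, and apply Theorem \ref{pow-uniq-rudin} to conclude that its zero set is either all of $\mathcal{U}$ or has no limit points.)
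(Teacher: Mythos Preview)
Your proof is correct and essentially the same as the paper's: the paper directly forms $p_x(y)=\det(yI-F(x)^\ast F(x))$, observes that $\Res(p_x,p_x')$ is analytic in $x$, and applies Theorem \ref{pow-uniq-rudin}, which is exactly the alternative you spell out (and your primary route, citing the preceding eigenvalue theorem, is the same argument packaged one level up). You are in fact more careful than the paper in justifying that the entries of $F(x)^\ast$ are real-analytic on $\mathcal{U}$, a point the paper simply asserts.
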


\begin{proof}
The singular values of an $m\times n$ complex matrix $A$ are  positive square roots of the eigenvalues of  $A^{\ast}A$. Set $$p_x(y)=\det \left( yI-F(x)^{\ast}F(x) \right)=y^n+ \sum_{k=1}^n h_k(x)y^{n-k}.$$ Then, $p_x(y)$ is a polynomial in $y$,  and $h_k(x)$ is analytic, for each $1  \leq k  \leq n$. For each fixed $x\in \mathcal{U}$, the singular values of $F(x)$ are the positive square roots of the roots of the polynomial $p_x(y)=0$. So, if $p_x(y)=0$ has multiple roots, then  $F(x)$ has repeated singular values.  Now, by Theorem \ref{res}, $p_x(y)=0$ has multiple roots if and only if $\Res(p_x(y),p'_x(y))=0$, where $p'_x(y)=ny^{n-1} + \sum\limits_{k=1}^{n-1}(n-k)h_k(x)y^{n-k-1}$. Also, it is easy to see that, $\Res(p_x(y),p'_x(y))$ is an analytic function of $x$ on $\mathcal{U}$. Hence, by Theorem \ref{pow-uniq-rudin}, the zero set of  $\Res(p_x(y),p'_x(y))$ is either the set $ \mathcal{U}$  itself or an isolated subset of $\mathcal{U}$.

If $\Res(p_x(y),p'_x(y))=0$ for all $x\in \mathcal{U}$, then the polynomial  $p_x(y)=0$ has multiple roots for all $x \in \mathcal{U}$. Thus, all matrices in $F(\mathcal{U})$ has repeated singular values, so $\mathcal{Y}(\mathcal{U})=\mathcal{U}$. If the zero set of $\Res(p_x(y),p'_x(y))=0$ is isolated in  $ \mathcal{U}$, then $\mathcal{Y}(\mathcal{U})$ has no limit points.
\end{proof}

In the next theorem, we consider the entries of $F(x)$ are polynomials, instead of analytic functions. In this case, the domain of the function $F$ need not be an open interval.
\begin{thm}\label{fin}
Let $F:\mathcal{U} \subseteq \mathbb{R} \rightarrow M_{m \times n} (\mathbb{C})$ be a function defined by $F(x)=\sum\limits_{k=0}^p A_k x^k,$ where $A_k \in M_{m \times n}(\mathbb{C})$ for $k=0,1,\dots,p$. Then, either $\mathcal{Y}(\mathcal{U})=\mathcal{U}$ or $\mathcal{Y}(\mathcal{U})$ is finite.
\end{thm}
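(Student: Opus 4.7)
The plan is to mirror the argument of Theorem \ref{pd}, reducing the singular value problem to an eigenvalue problem via the Hermitian matrix $G(x) := F(x)^{\ast}F(x)$ whose nonnegative square roots of eigenvalues are exactly the singular values of $F(x)$. The key point that makes the reduction go through in the polynomial setting is that $x$ ranges over $\mathbb{R}$, so $x = \bar{x}$.

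First I would write $F(x)^{\ast} = \sum_{k=0}^p A_k^{\ast} x^k$ (valid since $x \in \mathbb{R}$), and multiply out to obtain $G(x) = \sum_{k=0}^{2p} B_k x^k$ for suitable $B_k \in M_{n \times n}(\mathbb{C})$. Thus every entry of $G(x)$ is a polynomial in $x$ with complex coefficients. Next, form the characteristic polynomial
$$p_x(y) = \det(yI - G(x)) = y^n + \sum_{k=1}^n q_k(x) y^{n-k},$$
and observe that expanding the determinant shows each coefficient $q_k(x)$ is a polynomial in $x$. Since $G(x)$ is positive semidefinite and $t \mapsto \sqrt{t}$ is injective on $[0,\infty)$, the matrix $F(x)$ has repeated singular values if and only if $p_x(y)$ has a repeated root.

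Then I would invoke Theorem \ref{res}: since the leading coefficient of $p_x(y)$ is $1$ and the leading coefficient of $p_x'(y) = ny^{n-1} + \sum_{k=1}^{n-1}(n-k)q_k(x) y^{n-k-1}$ is $n \neq 0$, over the algebraically closed field $\mathbb{C}$ the polynomial $p_x(y)$ has a repeated root if and only if $\Res(p_x(y), p_x'(y)) = 0$. Because the Sylvester matrix has entries that are polynomials in $x$, its determinant $\Res(p_x(y), p_x'(y))$ is itself a polynomial in $x$ with complex coefficients. Consequently this polynomial either vanishes identically on $\mathcal{U}$, which forces $\mathcal{Y}(\mathcal{U}) = \mathcal{U}$, or it has only finitely many zeros, in which case $\mathcal{Y}(\mathcal{U})$ is contained in this finite zero set.

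I do not anticipate a hard step here: the only point requiring a moment of care is verifying that the entries of $F(x)^{\ast}F(x)$ remain polynomials in $x$, which uses that the parameter is real (and fails for complex $x$, explaining why $\mathcal{U} \subseteq \mathbb{R}$ rather than $\mathbb{C}$). Everything else is a direct transcription of the proof of Theorem \ref{pd} with $F(x)$ replaced by $F(x)^{\ast}F(x)$.
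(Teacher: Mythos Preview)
Your proposal is correct and follows essentially the same approach as the paper: form the characteristic polynomial of $F(x)^{\ast}F(x)$, note its coefficients are polynomials in $x$, and conclude via the resultant. If anything, you are slightly more careful than the paper in justifying why the entries of $F(x)^{\ast}F(x)$ are polynomials in the real parameter $x$ and why repeated singular values correspond exactly to repeated roots of $p_x(y)$.
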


\begin{proof}
It is easy to see that, $(i,j)$th entry of the matrix $F(x)$   is $\sum\limits_{k=0}^p a_{ij}^{(k)} x^k$, where $a_{ij}^{(k)}$ is the $(i,j)$th entry of $A_k$ for $i=1,2,\dots,m$ and $j=1,2,\dots,n$.

Set $$p_x(y)=\det \left( yI-F(x)^{\ast}F(x) \right)=y^n+ \sum_{k=1}^n q_k(x)y^{n-k}.$$ Then $p_x(y)$ is a polynomial in $y$, and each coefficient $q_k(x)$ is a polynomial in $x$. The singular values of $F(x)$ are the positive square roots of the roots of the equation  $p_x(y)=0$ for each $x\in \mathcal{U}.$

Now for each $x$ in $\mathcal{U}$, $F(x)$ has repeated singular values if and only if $p_x(y)=0$ has repeated roots. By Theorem \ref{res}, $p_x(y)=0$ has repeated roots if $\Res(p_x(y),p'_x(y))=0$, where $p'_x(y)=ny^{n-1} + \sum\limits_{k=1}^{n-1}(n-k)q_k(x)y^{n-k-1}.$ Rest of proof is similar to that of Theorem \ref{pd}.
\end{proof}

\begin{rmk}\label{mo-ans}
	{\rm Now, let us answer the following question which we discussed in the introduction:  For an $n \times n $ complex matrix  $A$ and $\epsilon>0$, does there exist  a complex symmetric(not Hermitian) matrix $S$ such that $\Vert S\Vert \leq \epsilon$ and $A+S$ has only distinct singular values?  Let $B=\diag(b_1,b_2,\dots,b_n)$ be an $n\times n$ real diagonal matrix with distinct diagonal entries. Consider the function $F(t)=A+tB$  defined on $[0,1]$. Now,by Theorem \ref{fin},  $F([0,1])$ contains only finitely many matrices whose singular values are repeated. So we can choose an $c$ in $(0,1)$ such that $A+cB$ has distinct singular values and $\vert c \vert \Vert B \Vert \leq \epsilon$. Hence, $S=cB$ solves the problem.}
\end{rmk}
The following example shows that,  in Theorem \ref{a7} and  Theorem \ref{fin}, we may not be able to extend the domain $\mathcal{U}$ of the function $F(x)$ from a subset of $\mathbb{R}$ to a subset of $\mathbb{C}$.

\begin{exam}
Let us consider the function $F:\mathbb{C}\rightarrow M_{3\times 2}(\mathbb{C})$ defined by $F(z)=
\begin{bmatrix}
z&0\\
0&z\\
0&0
\end{bmatrix}
.$
Then, each entry of $F(z)$ is a polynomial in $z$, which is also an analytic function of $z$. Now,  $$F(z)^\ast F(z)=
\begin{bmatrix}
\vert z\vert^2 & 0\\
0& \vert z \vert^2
\end{bmatrix}
.$$
So the diagonal entries of $F(z)^\ast F(z)$ are neither polynomials in $z$ nor analytic functions in $z$ on $\mathbb{C}$. So the idea of the proofs Theorem \ref{a7} and Theorem \ref{fin} may not helpful.
\end{exam}

Using Theorem \ref{fin}, for a convex subset of $m\times n$ complex matrices, we establish a necessary and sufficient condition for the existence of a dense subset of matrices with distinct singular values. This result is a counter part of Corollary 1 of \cite{hart} for the singular values.

\begin{thm}\label{conv-dist-sing}
 Let $\Omega$ be a convex subset of  $M_{m \times n}(\mathbb{C})$, and $\Omega_d$ be the matrices in $\Omega$ having distinct singular values. Then $\Omega_d$ is dense in $\Omega$ if and only if $\Omega_d$ is non empty.
\end{thm}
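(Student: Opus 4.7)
The plan is to mimic the structure of the proof of Theorem \ref{a8}, with Theorem \ref{fin} in place of Theorem \ref{pd}, and without needing the nonsingularity ingredient (which is irrelevant here since singular values already make sense for rectangular matrices and there is no ``determinant'' obstruction to remove).

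The ``only if'' direction is immediate: density of $\Omega_d$ in $\Omega$ forces $\Omega_d$ to be nonempty as long as $\Omega$ is (and if $\Omega$ is empty the statement is vacuous).

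For the ``if'' direction, fix $B\in \Omega_d$ and an arbitrary $A\in \Omega$. I would introduce the linear path
\[
F(t)=(1-t)A+tB,\qquad t\in[0,1],
\]
which is a polynomial function of $t$ of degree at most $1$ whose coefficients are matrices in $M_{m\times n}(\mathbb{C})$. By convexity of $\Omega$, the image $F([0,1])$ lies entirely in $\Omega$. Applying Theorem \ref{fin} to $F$ on the domain $[0,1]$, the set
\[
\mathcal{Y}([0,1])=\{t\in[0,1]:F(t)\text{ has repeated singular values}\}
\]
is either all of $[0,1]$ or finite. Since $F(1)=B\in \Omega_d$ has all distinct singular values, $1\notin\mathcal{Y}([0,1])$, so the first alternative is ruled out and $\mathcal{Y}([0,1])$ is finite.

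Consequently, given any $\varepsilon>0$, we can pick $t\in(0,1)$ with $t\notin\mathcal{Y}([0,1])$ and $t\,\Vert B-A\Vert<\varepsilon$; then $F(t)\in\Omega$, $F(t)$ has distinct singular values, and $\Vert F(t)-A\Vert=t\Vert B-A\Vert<\varepsilon$. Hence every open ball $\mathrm{B}(A;\varepsilon)$ meets $\Omega_d$, so $\Omega_d$ is dense in $\Omega$. There is no genuine obstacle here; the only step worth mentioning is recognizing that Theorem \ref{fin} applies verbatim because $F(t)$ is a matrix polynomial in the real parameter $t$ and $[0,1]\subset\mathbb{R}$, which is precisely the hypothesis required.
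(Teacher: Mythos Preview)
Your proof is correct and follows essentially the same approach as the paper: both take the convex segment $F(t)=(1-t)A+tB$, invoke Theorem~\ref{fin} to conclude that the set of parameters with repeated singular values is finite (using that $B$ has distinct singular values to rule out the degenerate case), and then pick $t$ close to $0$ to land in $\Omega_d\cap \mathrm{B}(A;\varepsilon)$. The only cosmetic difference is that the paper phrases the last step via $s=\min\{t>0:t\in\mathcal{Y}([0,1])\}$ rather than your direct choice of small $t$.
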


\begin{proof}

If $\Omega_d$ is dense in $\Omega$, then $\Omega_d$ is non empty. Suppose that $\Omega_d$ is non empty. Let $A\in \Omega$ and $B\in \Omega_d$. Let $E(t)=(1-t)A+tB$ where $0 \leq t \leq 1$. Then $E(t) \subseteq \Omega$, as $\Omega$ is a convex subset.
 Now,  $B$ is in $E([0,1])$, and $B$  has distinct singular values. So $\mathcal{Y}([0,1])$ is a proper subset of $[0,1]$. As the entries  of $E(t)$ are  polynomials in $t$, so, by Theorem \ref{fin}, $\mathcal{Y}([0,1])$ is finite.





Let $\mathcal{L}=\{t>0 : t\in \mathcal{Y}([0,1])\}$. If $\mathcal{L}$ is nonempty, then, define $s=\min \mathcal{L}$, otherwise choose $s$ to be any real number in $(0,1)$. Now, each matrix in $E((0,s))$ has distinct singular values. Hence for arbitrary $\epsilon>0$, the open ball $\mathrm{B}(A;\epsilon)$ has nonempty intersection with $\Omega_d$.  Hence $\Omega_d$ is dense in $\Omega$.
\end{proof}

The following corollary gives a necessary and sufficient condition under which a subspace of $M_{m \times n}(\mathbb{C})$ has a  dense subset, which is a simple consequence of the previous theorem.

\begin{cor}\label{sub}
Let $\Omega$ be a subspace of  $M_{m \times n}(\mathbb{C})$, and $\Omega_d$ be the matrices in $\Omega$ having distinct singular values. Then $\Omega_d$ is dense in $\Omega$ if and only if $\Omega_d $ is nonempty.
\end{cor}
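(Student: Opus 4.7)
The plan is to derive the corollary directly from Theorem \ref{conv-dist-sing} by observing that every subspace is automatically a convex subset. More precisely, if $\Omega$ is a subspace of $M_{m \times n}(\mathbb{C})$ and $A, B \in \Omega$, then for any $t \in [0,1]$ the linear combination $(1-t)A + tB$ lies in $\Omega$, so $\Omega$ is convex. Thus the hypotheses of Theorem \ref{conv-dist-sing} are satisfied, and the biconditional characterization of density of $\Omega_d$ transfers verbatim.

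The forward direction ($\Omega_d$ dense in $\Omega$ implies $\Omega_d$ nonempty) is immediate: a dense subset of a nonempty ambient set cannot be empty. One might even observe that a subspace always contains the zero matrix, so $\Omega$ itself is nonempty. For the reverse direction, assuming $\Omega_d \neq \emptyset$, pick $A \in \Omega$ and $B \in \Omega_d$; then the segment $E(t) = (1-t)A + tB$ lies in $\Omega$ by the subspace property, and the argument of Theorem \ref{conv-dist-sing}, invoking Theorem \ref{fin} on the polynomial path, shows that $E(t)$ has distinct singular values for all but finitely many $t \in [0,1]$. Choosing $t$ arbitrarily small then produces points of $\Omega_d$ inside every $\mathrm{B}(A;\epsilon)$.

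There is no real obstacle here since the corollary is essentially a restriction of the theorem to a special class of convex sets. The only step that must be stated explicitly is the remark that a subspace is convex, after which the proof is simply a citation of Theorem \ref{conv-dist-sing}. I therefore expect the written-out proof to be just two or three lines.
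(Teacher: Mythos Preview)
Your proposal is correct and matches the paper's approach exactly: the paper simply states that the corollary is a direct consequence of Theorem \ref{conv-dist-sing}, which is precisely your observation that a subspace is convex so the theorem applies verbatim. No additional argument is needed.
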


The following lemma gives a condition, which can confirm the existence of matrix whose singular values are distinct, in a subset of a $m\times n$ complex matrices.

\begin{lem}\label{boun-dist-sing}
Let $\Omega$ be a subset of  $M_{m \times n}(\mathbb{C})$, and $\Omega_d$ be the set of all matrices in $\Omega$ having distinct singular values. If $\overline{\Omega}$ contains at least one matrix having distinct singular values, then $\Omega_d $ is nonempty.
\end{lem}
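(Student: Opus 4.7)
The plan is to mimic the proof of Lemma \ref{dis}, replacing the role of eigenvalues by singular values and using the fact that singular values of $F$ are the (non-negative) square roots of the eigenvalues of $F^{\ast}F$. The only nontrivial ingredient we need beyond what was used for the eigenvalue case is that the set of matrices in $M_{m\times n}(\mathbb{C})$ with repeated singular values is closed.

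First I would dispose of the trivial case: if $\Omega$ already contains a matrix with distinct singular values, then $\Omega_d\neq\emptyset$ and we are done. So assume $\overline{\Omega}\setminus \Omega$ contains a matrix $A$ with distinct singular values, and pick a sequence $\{A_k\}\subseteq\Omega$ with $A_k\to A$ in the Frobenius norm.

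Next I would argue that eventually $A_k$ must have distinct singular values. Suppose for contradiction that every $A_k$ has at least one repeated singular value. Consider the sequence $\{A_k^{\ast}A_k\}$ in $M_{n\times n}(\mathbb{C})$. The map $B\mapsto B^{\ast}B$ is continuous in the Frobenius norm, so $A_k^{\ast}A_k \to A^{\ast}A$. Because the singular values of a matrix $B$ are exactly the non-negative square roots of the eigenvalues of $B^{\ast}B$, each matrix $A_k^{\ast}A_k$ has a repeated eigenvalue, whereas $A^{\ast}A$ has $n$ distinct eigenvalues (the squares of the distinct singular values of $A$). But by Theorem \ref{closed}, the set of matrices in $M_{n\times n}(\mathbb{C})$ with a repeated eigenvalue is closed, so the limit $A^{\ast}A$ would have to have a repeated eigenvalue — a contradiction.

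Hence some $A_k\in \Omega$ has distinct singular values, so $\Omega_d\neq\emptyset$. The only step requiring any care is the reduction to the eigenvalue case via the continuity of $B\mapsto B^{\ast}B$, but since the Frobenius norm is submultiplicative this is immediate; the topological closedness that does the real work is already supplied by Theorem \ref{closed}, so there is no serious obstacle.
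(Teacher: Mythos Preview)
Your proof is correct and follows essentially the same approach as the paper: dispose of the trivial case, take a sequence $\{A_k\}\subseteq\Omega$ converging to $A\in\overline{\Omega}$ with distinct singular values, pass to $A_k^{\ast}A_k\to A^{\ast}A$, and invoke Theorem~\ref{closed} to derive a contradiction if every $A_k$ had a repeated singular value. The only cosmetic difference is that you explicitly note the continuity of $B\mapsto B^{\ast}B$, which the paper uses implicitly.
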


\begin{proof}
 If $\Omega$ contains a matrix, having distinct  singular values, then $\Omega_d$ is non empty.
Let $A$ be a limit point of $\Omega$, whose singular values are distinct. Then there exists a sequence $\{A_p\}$ in $\Omega$ which converges to $ A$. Now, if $A_p$ has repeated singular values for all $p \in \mathbb{N}$, then $A_p^{\ast}A_p$ has repeated eigenvalues for all $p \in \mathbb{N}$. Now,
$ \lim\limits_{p\rightarrow\infty} A_p^{\ast}A_p=A^{\ast}A.$ Again, each $A_p^{\ast}A_p$ has repeated eigenvalues, so, by Theorem \ref{closed}, $A^{\ast}A$ has repeated eigenvalues. Hence $A$ has repeated singular values, which contradict the assumption that $A$ has distinct singular values. Hence, $\{A_p\}$ must contains matrices, whose singular values are distinct. So $\Omega_d$ is nonempty.
\end{proof}

The following corollary is an analogous of Theorem \ref{conv-dist-sing}, where we weaken the condition $\Omega_d$ is nonempty.

\begin{cor}\label{13}
Let $\Omega$ be a convex subset of  $M_{m \times n}(\mathbb{C})$ and $\Omega_d$ be the matrices in $\Omega$ having distinct singular values. Then $\Omega_d$ is dense in $\Omega$ if and only if $\overline{\Omega}$ contains a matrix having distinct singular values.
\end{cor}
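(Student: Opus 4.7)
The plan is to derive this corollary as a direct combination of Lemma \ref{boun-dist-sing} and Theorem \ref{conv-dist-sing}, since these two results cover exactly the gap between the hypothesis ``$\overline{\Omega}$ contains a matrix with distinct singular values'' and the hypothesis ``$\Omega_d$ is nonempty'' used in Theorem \ref{conv-dist-sing}.

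For the ``only if'' direction, I would argue that if $\Omega_d$ is dense in $\Omega$, then in particular $\Omega_d$ is nonempty (assuming $\Omega$ itself is nonempty, else the statement is vacuous). Any matrix in $\Omega_d$ lies in $\Omega \subseteq \overline{\Omega}$ and by definition has distinct singular values, so $\overline{\Omega}$ contains a matrix with distinct singular values.

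For the ``if'' direction, I would proceed in two steps. First, suppose $\overline{\Omega}$ contains some matrix $A$ with distinct singular values. Applying Lemma \ref{boun-dist-sing} directly yields that $\Omega_d$ is nonempty. Second, since $\Omega$ is convex and $\Omega_d \neq \emptyset$, Theorem \ref{conv-dist-sing} applies verbatim and gives that $\Omega_d$ is dense in $\Omega$. This completes the proof.

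There is essentially no obstacle here; the entire content of the corollary is the observation that in the convex setting the weaker topological hypothesis ``$\overline{\Omega}$ contains a matrix with distinct singular values'' can be upgraded to ``$\Omega_d$ is nonempty'' via Lemma \ref{boun-dist-sing}, after which Theorem \ref{conv-dist-sing} finishes the job. The only thing to be mildly careful about is the trivial case where $\Omega$ is empty, but then $\overline{\Omega}$ is also empty and both sides of the equivalence are vacuously true, so this case requires no separate argument.
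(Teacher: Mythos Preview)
Your proposal is correct and matches the paper's own proof exactly: the paper simply states that the result follows from Theorem~\ref{conv-dist-sing} and Lemma~\ref{boun-dist-sing}, which is precisely the two-step argument you give. Your additional remarks on the ``only if'' direction and the empty-$\Omega$ case merely make explicit what the paper leaves implicit.
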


\begin{proof}
Proof follows from Theorem \ref{conv-dist-sing} and Lemma \ref{boun-dist-sing}.
\end{proof}

The next theorem is a consequence of Theorem \ref{fin} and Lemma \ref{boun-dist-sing}.
\begin{thm}
    Let $\Gamma$ be a subset of $ M_{m \times n}(\mathbb{C}) $, and  let $\Gamma_d$ be the set of all matrices in $\Gamma$ whose singular values are distinct.  Suppose that for given two matrices $A$ and $B$  in $\Gamma$,  there exists a polynomial $p(x)=\sum\limits_{i=0}^k A_i x^i $ on $[0,1]$ such that $p(0)=A , p(1)=B$ and $p([0,1]) \subset \Gamma$. Then, $\Gamma_d$ is dense in $\Gamma$ if and only if $\overline{\Gamma}$ contains a matrix whose singular values are distinct.
\end{thm}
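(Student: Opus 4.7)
The plan is to mimic the proof of Theorem \ref{conv-gen-eig}, replacing the eigenvalue input (Theorem \ref{pd} and Lemma \ref{dis}) by the singular-value input (Theorem \ref{fin} and Lemma \ref{boun-dist-sing}).

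The ``only if'' direction is immediate: if $\Gamma_d$ is dense in $\Gamma$ and $\Gamma$ is nonempty, then in particular $\Gamma_d \subseteq \overline{\Gamma}$ already contains a matrix with distinct singular values. (If $\Gamma$ is empty the statement is vacuous.)

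For the ``if'' direction I would proceed in two stages. First, I invoke Lemma \ref{boun-dist-sing}: the hypothesis that $\overline{\Gamma}$ contains a matrix with distinct singular values yields $\Gamma_d \neq \emptyset$. Pick any $B \in \Gamma_d$ and any $A \in \Gamma$; by assumption, there is a polynomial $p(x) = \sum_{i=0}^{k} A_i x^i$ with $p(0)=A$, $p(1)=B$ and $p([0,1]) \subseteq \Gamma$. View $p$ as a function $F: [0,1] \subseteq \mathbb{R} \to M_{m\times n}(\mathbb{C})$ in the form required by Theorem \ref{fin}. Since $p(1)=B$ has distinct singular values, the set $\mathcal{Y}([0,1])$ of parameters for which $p(t)$ has repeated singular values is a proper subset of $[0,1]$; by Theorem \ref{fin}, it must then be finite.

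Second, I use this finiteness to approximate $A$. Let $\mathcal{L} = \{t > 0 : t \in \mathcal{Y}([0,1])\}$; set $s = \min \mathcal{L}$ if $\mathcal{L} \neq \emptyset$, and otherwise choose any $s \in (0,1)$. Then for every $t \in (0,s)$ the matrix $p(t)$ lies in $\Gamma$ and has distinct singular values, i.e.\ $p(t) \in \Gamma_d$. As $t \to 0^+$ one has $p(t) \to p(0) = A$, so $A$ is in the closure of $\Gamma_d$. Since $A \in \Gamma$ was arbitrary, $\Gamma_d$ is dense in $\Gamma$.

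There is no real obstacle here; the only mild subtlety is verifying that the hypothesis of Theorem \ref{fin} is met, namely that $t \mapsto p(t)$ is genuinely a matrix polynomial in a real variable with coefficients in $M_{m\times n}(\mathbb{C})$, which is exactly the form of $p$ given in the statement. Everything else is a direct transcription of the argument used in Theorem \ref{conv-gen-eig}, with ``nonsingular with distinct eigenvalues'' replaced by ``distinct singular values'' and with Lemma \ref{zr} no longer needed.
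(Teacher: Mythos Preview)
Your proposal is correct and matches the paper's approach: the paper's proof is the single line ``The proof [is] similar to that of Theorem \ref{conv-dist-sing},'' and what you have written is precisely the unpacking of that reference, with Lemma \ref{boun-dist-sing} supplying $\Gamma_d \neq \emptyset$ and Theorem \ref{fin} providing the finiteness along the polynomial path. Your remark that Lemma \ref{zr} is no longer needed is also on point.
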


\begin{proof}

    The proof similar to that of Theorem \ref{conv-dist-sing}.
\end{proof}


For a matrix $A\in M_{m \times n} (\mathbb{C})$, $A=(a_{ij})$ has a nonzero diagonal, if there exists an injective function $f:\{1,2,\dots,n\}\rightarrow \{1,2,\dots,m\}$ such that $a_{f(i)i}$ is nonzero for $i=1,2,\dots,n $ \cite{minc-per}. Let $\mathcal{P}$ denote the set of all $m\times n$ matrices whose entries are either $0$ or $1$. For an $m\times n$ complex matrix $A=(a_{ij})$, define $\tilde{A}= (\tilde{a}_{ij})$, as follows:

\[
\tilde{a}_{ij}=\left\{
\begin{array}{cc}
1, &\hspace{5mm} a_{ij}\neq 0,\\
0, &\hspace{5mm} a_{ij}=0.
\end{array}
\right.
\]

For a matrix $P=(p_{ij})\in \mathcal{P}$,  define $\mathcal{S}(P)=\{A\in M_{m \times n} (\mathbb{C}): \tilde{a}_{ij} \leq p_{ij} \}.$ It is easy to verify that $\mathcal{S}(P)$ is a subspace of $M_{m \times n} (\mathbb{C})$.

It is clear from the previous theorems that $\mathcal{S}(P)$ contains a dense subset of matrices, whose singular values are distinct if and only if $\overline{\mathcal{S}(P)}$ includes a matrix having distinct singular values.  For a matrix, $P\in \mathcal{P}$, let $\mathcal{S}(P)_{d}$ denote the set of all matrices in $\mathcal{S}(P)$ having distinct singular values. In the next theorem, we give a necessary and sufficient condition for the subset $\mathcal{S}(P)_{d}$ to be dense in  $\mathcal{S}(P)$. 

\begin{thm}\label{zero-patt-sing}
Let $P\in \mathcal{P}$. Then $\mathcal{S}(P)_{d}$ is dense in $\mathcal{S}(P)$ if and only if either $P$ or $P_{ii}$ has a nonzero diagonal for some $i=1,2,\dots,n$, where $P_{ii}$ is obtained by deleting the $i$th row and the $i$th column of $P$.
\end{thm}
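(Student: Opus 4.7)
The plan is to use Corollary~\ref{sub}: since $\mathcal{S}(P)$ is a subspace of $M_{m\times n}(\mathbb{C})$, $\mathcal{S}(P)_d$ is dense in $\mathcal{S}(P)$ if and only if it is nonempty. Hence it suffices to prove that $\mathcal{S}(P)_d$ is nonempty if and only if $P$ has a nonzero diagonal or $P_{ii}$ does for some $i\in\{1,\ldots,n\}$. For the ``if'' direction I would construct explicit witnesses. If $P$ has a nonzero diagonal via an injection $f:\{1,\ldots,n\}\to\{1,\ldots,m\}$, set $a_{f(j),j}=d_j$ for pairwise distinct positive reals $d_1,\ldots,d_n$ and all other entries of $A$ equal to zero. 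Injectivity of $f$ forces $A^{\ast}A=\diag(d_1^2,\ldots,d_n^2)$, whose diagonal entries are pairwise distinct positive reals, so the $n$ singular values $d_1,\ldots,d_n$ of $A$ are distinct, placing $A$ in $\mathcal{S}(P)_d$. If instead $P_{ii}$ has a nonzero diagonal via $g:\{1,\ldots,n\}\setminus\{i\}\to\{1,\ldots,m\}\setminus\{i\}$, place $a_{g(j),j}=d_j$ (pairwise distinct positive reals for $j\neq i$) and zero out everything else; the $i$th column of $A$ is then identically zero, $A^{\ast}A$ is diagonal with a single zero at position $(i,i)$ and distinct positive values elsewhere, and the singular values $0,d_{j_1},\ldots,d_{j_{n-1}}$ of $A$ are again pairwise distinct.

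For the ``only if'' direction, fix $A\in\mathcal{S}(P)_d$. Distinctness of the $n$ nonnegative singular values of $A$ permits at most one zero among them, forcing $\mathrm{rank}(A)\in\{n-1,n\}$. If $\mathrm{rank}(A)=n$, some $n\times n$ minor of $A$ is nonzero, and the Leibniz formula for the determinant produces a permutation with a nonzero product of entries; this yields an injection $f:\{1,\ldots,n\}\to\{1,\ldots,m\}$ with $a_{f(j),j}\neq 0$, so $p_{f(j),j}=1$ and $P$ has a nonzero diagonal. If $\mathrm{rank}(A)=n-1$, the plan is to locate an index $i\in\{1,\ldots,n\}$ such that the submatrix $\widetilde A$ obtained from $A$ by deleting both the $i$th row and the $i$th column still has rank $n-1$; Leibniz expansion of an $(n-1)\times(n-1)$ nonzero minor of $\widetilde A$ then provides an injection $g:\{1,\ldots,n\}\setminus\{i\}\to\{1,\ldots,m\}\setminus\{i\}$ with $p_{g(j),j}=1$, witnessing that $P_{ii}$ has a nonzero diagonal.

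The hard part is producing such an $i$ in the $\mathrm{rank}(A)=n-1$ case. A valid $i$ must be chosen so that column $i$ is redundant (i.e., lies in the span of the other columns, equivalently $i$ belongs to the support of some vector in $\ker A$) and also so that, after removing column $i$, row $i$ is redundant in the resulting $m\times(n-1)$ matrix. I would attempt to exhibit such an $i$ by a matching/Hall-type argument that combines the supports of nonzero vectors in $\ker A$ and in $\ker A^{\ast}$, and to argue that the failure of such an $i$ to exist would impose zero-pattern constraints on $A$ inconsistent with its having $n-1$ distinct positive singular values together with a single simple zero singular value. This coordination of the right and left null spaces through a common index is the delicate core of the argument.
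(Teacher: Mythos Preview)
Your ``if'' direction and the rank-$n$ subcase of the ``only if'' direction are essentially the paper's argument. The divergence is in the rank-$(n-1)$ subcase, and you are right to be cautious: the step you flag as ``the delicate core''---finding an index $i$ so that deleting both row $i$ and column $i$ preserves rank $n-1$---cannot be completed, because the statement as written is false.

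Take $m=n=2$ and $P=\begin{pmatrix}0&1\\0&0\end{pmatrix}$. The matrix $A=\begin{pmatrix}0&1\\0&0\end{pmatrix}\in\mathcal{S}(P)$ satisfies $A^{\ast}A=\diag(0,1)$ and hence has singular values $0$ and $1$; so $\mathcal{S}(P)_d\neq\emptyset$ and, by Corollary~\ref{sub}, $\mathcal{S}(P)_d$ is dense in $\mathcal{S}(P)$. Yet the first column of $P$ is zero, so $P$ has no nonzero diagonal, and $P_{11}=(p_{22})=(0)$, $P_{22}=(p_{11})=(0)$, so no $P_{ii}$ has one either. Thus density holds while the combinatorial condition fails.

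The paper's own proof handles this subcase by taking an arbitrary nonsingular $(n-1)\times(n-1)$ submatrix of $A$ and asserting that it yields a nonzero diagonal of some $P_{ii}$; this silently assumes the submatrix can be chosen to avoid row $i$ and column $i$ for the \emph{same} index $i$, which the example above shows need not be possible. The characterisation that actually matches the density condition is that $P$ has term rank at least $n-1$: either $P$ has a nonzero diagonal or, for some $i\in\{1,\dots,n\}$, the matrix obtained from $P$ by deleting column $i$ alone has one. Your Hall-type intuition would establish that corrected statement, but no argument can force the right and left null spaces to share a common index in general, so the plan as outlined cannot succeed for the statement as written.
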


\begin{proof}
Let $\mathcal{S}(P)_d$ be dense in $\mathcal{S}(P)$. Then $\mathcal{S}(P)$ contains a matrix $A$ whose singular values are distinct.
If all the $n$ singular values of $A$ are nonzero, then the rank of the matrix $A$ is $n$. Thus $A$ has an $n\times n$ sub matrix $A_s$ such that $\det(A_s)\neq 0$. Let the  $i_k$th row of $A$ be the $k$th row of $A_s$. Now, by definition of determinant, it is clear that $A_s$ must has a nonzero diagonal.  Let  $a_{\sigma(k)k}$, where $k=1,2,\dots,n$, be the elements of a nonzero diagonal,  where   $\sigma$ is a permutation on $\{1,2,\dots,n\}$. The entries  $a_{\sigma(k)k}$ in $A_s$ and  $a_{i_{\sigma(k)}k}$ in $A$ are the same. The function, which maps $k$ to $i_k$ is an injective function from $\{1,2,\dots,n\}$ to $\{1,2,\dots,m\}$. Hence the function $f:\{1,2,\dots,n\} \rightarrow \{1,2,\dots,m\}$ defined by  $f(k)=i_{\sigma(k)}$ is an injective function. Thus $A$ has a nonzero diagonal whose elements are $a_{f(k)k}$ where $1,2,\dots,n$, and hence the matrix $P$ has a nonzero diagonal too.

If the matrix $A$ has exactly $n-1$ distinct nonzero singular values, then the rank of $A$ is $n-1$. So $A$ has an $(n-1)\times (n-1)$ sub matrix $A_s$ such that $\det(A_s) \neq 0$. Now applying the same argument for the matrix $A_s$ as above, we get a nonzero diagonal in the matrix $A_s$. Hence the matrix  $P_{ii}$ has a nonzero diagonal, for some $i \in \{1, \dots , n\}$.

Conversely,  suppose that $P=(p_{ij})$ has a nonzero diagonal with the diagonal entries $a_{f(i)i}$ for $i=1,2,\dots,n$, where $f:\{1,2,\dots,n\} \rightarrow \{1,2,\dots,m \}$ is an injective function. Let us construct the matrix $A=(a_{ij})$ as follows:

\[
a_{ij}=\left\{
\begin{array}{cc}
k, &\hspace{5mm}  \mbox{if } i=f(k) \mbox{~and ~}j=k, \\
0, &\hspace{5mm} \mbox{otherwise}.
\end{array}
\right.
\]
Then $A^\ast A=diag(1,4,\dots,n^2)$.

 If $P$ does not have any nonzero diagonal, then, for some $k$,  the matrix $P_{kk}$ has a nonzero diagonal whose entries are $p_{g(i)i}$ for $i=1,2,\dots,n, i\neq k$, where $g:\{1,2,\dots,n \}\setminus \{k\} \rightarrow \{1,2,\dots,m\}\setminus \{k\}$.  Now, construct the matrix $B=(b_{ij})$ as follows:

\[
b_{ij}=\left\{
\begin{array}{cc}
l, &\hspace{5mm} \mbox{if } i=g(l),~ j=l \mbox{~and~} l\neq k,\\
0, &\hspace{5mm} \mbox{otherwise}.
\end{array}
\right.
\]
Then,  $B^\ast B=diag(1,4,\dots,(k-1)^2,0,(k+1)^2,\dots,n^2)$.

Thus,  in each cases,  there exist matrices in $ \mathcal{S}(P)$ which  has distinct singular values. Hence, by Corollary \ref{sub},  $\mathcal{S}(P)_d$ is dense in $\mathcal{S}(P)$.
 \end{proof}

This theorem is a counter part of  \cite[Theorem 2 ]{hart} for the singular values of rectangular matrices.

\section{Dense subsets with distinct analytic eigenvalues and analytic singular values}
 In this section, we shall establish some of the results related to the denseness of subsets of matrices having distinct analytic eigenvalues/analytic singular values. The results are parallel to that of Section \ref{cls-eigen-sing}.   The following lemma will be useful in the proof of some of the results of this section.

\begin{lem}\label{a1}
Let $f_1(x),f_2(x),\dots, f_n(x)$ be analytic functions of  $x$ on $[a,b]$ and $\mathcal{Y}=\bigcup\limits_{i \neq j} \{x \in [a,b] : f_i(x)=f_j(x)\}$. Then, either $\mathcal{Y}=[a,b]$ or $\mathcal{Y}$ is finite.
\end{lem}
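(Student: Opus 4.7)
The plan is to reduce the statement to an application of the identity theorem for real analytic functions on a compact interval, applied to each pairwise difference.

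First, for each ordered pair $i \neq j$, define $g_{ij}(x) = f_i(x) - f_j(x)$. Since $f_i$ and $f_j$ are analytic on $[a,b]$, so is $g_{ij}$. The set $\{x \in [a,b] : f_i(x) = f_j(x)\}$ is precisely the zero set $Z_{ij}$ of $g_{ij}$, so $\mathcal{Y} = \bigcup_{i \neq j} Z_{ij}$. Since there are only $\binom{n}{2}$ unordered pairs, it suffices to show that each $Z_{ij}$ is either all of $[a,b]$ or finite: a finite union of finite sets is finite, while if any single $Z_{ij} = [a,b]$ then $\mathcal{Y} = [a,b]$.

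Next I would invoke the identity principle. Analyticity of $g_{ij}$ on $[a,b]$ means that $g_{ij}$ extends to an analytic function on some open set containing $[a,b]$, where at each point it is represented by a convergent power series. By Theorem \ref{pow-uniq-rudin}, if the zero set of a power series expansion centered at some interior point has a limit point inside its interval of convergence, then all coefficients vanish, so $g_{ij}$ is identically zero on a neighborhood of that point. A standard connectedness argument then extends this to $g_{ij} \equiv 0$ on $[a,b]$. Thus if $Z_{ij}$ has a limit point in $[a,b]$, then $Z_{ij} = [a,b]$.

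Now suppose $Z_{ij} \neq [a,b]$ for every pair. Then no $Z_{ij}$ has a limit point in $[a,b]$. Since $[a,b]$ is compact, every infinite subset of $[a,b]$ has a limit point in $[a,b]$, which forces each $Z_{ij}$ to be finite; therefore $\mathcal{Y}$ is finite. On the other hand, if some pair satisfies $Z_{ij} = [a,b]$, then $\mathcal{Y} = [a,b]$. This dichotomy is exactly the statement of the lemma.

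There is no real obstacle here; the only subtle point is distinguishing ``has no limit points in $[a,b]$'' from ``is finite,'' which is resolved by compactness of $[a,b]$. Everything else is a direct book-keeping application of the identity theorem, and the conclusion mirrors the dichotomy already established in Theorem \ref{a4} and Theorem \ref{a7} for resultant-based arguments.
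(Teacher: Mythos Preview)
Your proposal is correct and follows essentially the same approach as the paper: reduce to the pairwise differences $g_{ij}=f_i-f_j$, apply the identity theorem (via Theorem~\ref{pow-uniq-rudin}) on an open interval containing $[a,b]$, and use compactness of $[a,b]$ to upgrade ``no limit point'' to ``finite.'' Your explicit mention of the connectedness step needed to propagate the local vanishing to all of $[a,b]$ is a detail the paper leaves implicit, but otherwise the arguments coincide.
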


\begin{proof}
	As each $f_i(x)$ is analytic on $[a,b]$, there exists an open interval $\mathcal{U}$,  containing $[a,b]$ such that each $f_i(x)$ is analytic on $\mathcal{U}$.

For $i\neq j$, define $\mathcal{Y}_{ij}=\{x\in \mathcal{U} :f_i(x)=f_j(x)\}$. Now, if the sets $\mathcal{Y}_{ij} \cap [a,b]$ are finite, then $\mathcal{Y}=\cup_{i\neq j} \mathcal{Y}_{ij} \cap [a,b]$ is finite. If  $\mathcal{Y}_{ij} \cap [a,b]$, for some $i$ and $j$, is infinite, then $\mathcal{Y}_{ij} \cap [a,b]$ must have a limit point in $\mathcal{Y}_{ij} \cap [a,b] \subset \mathcal{U}$. Thus, by Theorem \ref{pow-uniq-rudin},  $\mathcal{Y}_{ij}=\mathcal{U}$, and hence $\mathcal{Y}_{ij} \cap [a,b]=[a,b]$. That is, $\mathcal{Y}=\left(\cup_{i \neq j} \mathcal{Y}_{ij}\right) \cap [a,b]=[a,b]$.
\end{proof}

We divide this section into two subsection to facilitate understanding.
\subsection{Analytic eigenvalue case}
In this subsection, at first, we shall prove  a theorem similar to that of Theorem \ref{a4}, using analytic spectral decomposition in $M^{\mathbb{R}}_{n\times n}(\mathbb{C})$.

\begin{thm}\label{a2}
Let $F:[a,b] \rightarrow M^{\mathbb{R}}_{n \times n} (\mathbb{C})$ be a  function whose entries are analytic functions on $[a,b]$. Let $\mathcal{W}$ be the collection of $x$ in $[a,b]$, for which $F(x)$ has repeated eigenvalues. Then, either $\mathcal{W}=[a,b]$ or $\mathcal{W}$ is finite.
\end{thm}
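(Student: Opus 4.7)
The plan is to reduce the statement directly to Lemma \ref{a1} by invoking the analytic spectral decomposition (Theorem \ref{a9}). Since $F(x) \in M^{\mathbb{R}}_{n\times n}(\mathbb{C})$ for every $x \in [a,b]$ and the entries of $F$ are analytic on $[a,b]$, Theorem \ref{a9} applies and yields an analytic unitary matrix function $U(x)$ on $[a,b]$ such that $Q(x) = U(x)^{-1} F(x) U(x)$ is upper triangular with analytic diagonal entries $\lambda_1(x), \ldots, \lambda_n(x)$. Because similar matrices have the same spectrum, the $\lambda_i(x)$ are precisely the eigenvalues of $F(x)$ for each fixed $x$.

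Next I would observe that $F(x)$ has a repeated eigenvalue if and only if $\lambda_i(x) = \lambda_j(x)$ for some pair $i \neq j$. Therefore
\[
\mathcal{W} \;=\; \bigcup_{i \neq j} \{\, x \in [a,b] : \lambda_i(x) = \lambda_j(x)\,\}.
\]
Each $\lambda_i$ is analytic on $[a,b]$ by Theorem \ref{a9}, so this is exactly the set $\mathcal{Y}$ considered in Lemma \ref{a1} applied to the functions $\lambda_1, \ldots, \lambda_n$. Lemma \ref{a1} then gives the dichotomy: either $\mathcal{W} = [a,b]$ or $\mathcal{W}$ is finite, which is the conclusion.

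There is no genuine obstacle here; the work has already been done by Theorem \ref{a9} (which guarantees the existence of globally analytic eigenvalue branches, avoiding the usual difficulty of eigenvalues only being continuous at crossings) and by Lemma \ref{a1} (which packages the identity theorem for analytic functions in the form we need). The only point that requires a brief mention is why Theorem \ref{a9} applies, namely the hypothesis $F(x) \in M^{\mathbb{R}}_{n\times n}(\mathbb{C})$ for every $x \in [a,b]$, which is exactly the standing assumption of the theorem being proved.
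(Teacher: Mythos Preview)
Your proposal is correct and follows essentially the same approach as the paper: invoke Theorem \ref{a9} to obtain analytic eigenvalue functions $\lambda_1(x),\ldots,\lambda_n(x)$ on $[a,b]$, rewrite $\mathcal{W}$ as the union $\bigcup_{i\neq j}\{x:\lambda_i(x)=\lambda_j(x)\}$, and conclude via Lemma \ref{a1}. There is nothing to add.
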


\begin{proof}
As the entries of the function $F(x)$ are analytic on $[a,b]$. By Theorem \ref{a9}, there exists a unitary matrix $U(t)$ analytic on $[a,b]$ such that $$Q(x)=U^{-1}(x)F(x)U(x),$$ where $Q(x)$ is an upper-triangular matrix whose entries are analytic functions of $x$ on $[a,b]$. Since the eigenvalues of $F(x)$ are the diagonal entries of $Q(x)$, so the eigenvalues of $F(x)$ are analytic functions of $x$ on $[a,b]$. Let $e_1(x),e_2(x),...,e_n(x)$ be the eigenvalues of $F(x)$. So $\mathcal{W}=\bigcup\limits_{i\neq j} \{x\in [a,b] : e_i(x)=e_j(x)\}$. Hence, by Lemma \ref{a1}, either $\mathcal{W}$ is finite or $\mathcal{W}=[a,b]$.
\end{proof}

Now, using the previous theorem, we shall establish that if a convex subset $\Omega$ of  $M^{\mathbb{R}}_{n \times n}(\mathbb{C})$ contains a matrix whose eigenvalues are distinct, then the set of all matrices in $\Omega$ with distinct eigenvalues forms a dense subset of $\Omega$.
\begin{thm}\label{a3}
 Let $\Omega$ be a convex subset of  $M^{\mathbb{R}}_{n \times n}(\mathbb{C})$, and $\Omega_d$ be the matrices in $\Omega$ having distinct  eigenvalues. Then $\Omega_d$ is dense in $\Omega$ if and only if $\Omega_d$ is non empty.
\end{thm}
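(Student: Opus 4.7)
The plan is to mirror the convex-set argument from Theorem \ref{conv-dist-sing} and Theorem \ref{a8}, but now using the analytic-eigenvalue machinery of Theorem \ref{a2} in place of the resultant/Theorem \ref{fin} argument. The ``only if'' direction is immediate: if $\Omega_d$ is dense in $\Omega$ and $\Omega$ itself is nonempty, then $\Omega_d$ must be nonempty. (If $\Omega$ is empty there is nothing to prove.)

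For the ``if'' direction, fix any $A \in \Omega$ and some $B \in \Omega_d$, and form the segment
\[
E(t) = (1-t)A + tB, \qquad t \in [0,1].
\]
Because $\Omega$ is convex, $E(t) \in \Omega \subseteq M^{\mathbb{R}}_{n\times n}(\mathbb{C})$ for every $t \in [0,1]$, so each $E(t)$ has real spectrum. Moreover, the entries of $E(t)$ are polynomials (in particular, analytic functions) in $t$ on any open interval containing $[0,1]$. This is exactly the hypothesis required to invoke Theorem \ref{a2}.

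By Theorem \ref{a2}, the set $\mathcal{W} = \{\, t \in [0,1] : E(t)\ \text{has repeated eigenvalues}\,\}$ is either all of $[0,1]$ or finite. Since $E(1) = B \in \Omega_d$, the point $1 \notin \mathcal{W}$, so $\mathcal{W}$ must be finite. Let $\mathcal{L} = \{\, t \in \mathcal{W} : t > 0 \,\}$; set $s = \min \mathcal{L}$ if $\mathcal{L} \neq \emptyset$, and otherwise pick any $s \in (0,1)$. Then for every $t \in (0,s)$, $E(t) \in \Omega$ and $E(t)$ has distinct (real) eigenvalues, so $E(t) \in \Omega_d$. Letting $t \to 0^+$, we have $E(t) \to A$ in the Frobenius norm, so $A$ lies in the closure of $\Omega_d$. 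As $A \in \Omega$ was arbitrary, $\Omega_d$ is dense in $\Omega$.

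The only conceptual subtlety, and the single place where this proof differs meaningfully from that of Theorem \ref{conv-dist-sing}, is verifying that Theorem \ref{a2} applies along the segment $E(t)$; this requires precisely the convexity of $\Omega$ together with the ambient assumption $\Omega \subseteq M^{\mathbb{R}}_{n\times n}(\mathbb{C})$, which guarantees $E(t)$ stays in the set of matrices with real eigenvalues so that Theorem \ref{a9} (analytic Schur decomposition) can be invoked inside the proof of Theorem \ref{a2}. No further obstacle arises; the finite-exceptional-set conclusion of Theorem \ref{a2} is exactly the ingredient needed to push $A$ into $\overline{\Omega_d}$.
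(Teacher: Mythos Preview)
Your proof is correct and follows essentially the same argument as the paper: form the convex segment $E(t)=(1-t)A+tB$, apply Theorem \ref{a2} to conclude the set of $t$ with repeated eigenvalues is finite (since $E(1)=B$ rules out $\mathcal{W}=[0,1]$), and approach $A$ along the segment through points of $\Omega_d$. Your added remark that convexity is what keeps $E(t)$ inside $M^{\mathbb{R}}_{n\times n}(\mathbb{C})$, and hence makes Theorem \ref{a2} applicable, is a worthwhile clarification that the paper leaves implicit.
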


\begin{proof}

If $\Omega_d$ is dense in $\Omega$, then $\Omega_d$ is non empty. Now, suppose that $\Omega_d$ is non empty. Let $A\in \Omega$ and $B\in \Omega_d$. Let $E(t)=(1-t)A+tB$ where $0 \leq t \leq 1$. As $\Omega$ is a convex subset, so $E(t) \subseteq \Omega$.
 Let $\mathcal{W}$ be the subset of $[0,1]$ which contains all the elements $t$ for which $E(t)$ has repeated eigenvalues. Since $E([0,1])$ contains the matrix $B$, and $B$ has distinct eigenvalues,  so $\mathcal{W}$ is a proper subset of $[0,1]$, so, by Theorem \ref{a2}, $\mathcal{W}$ is finite.
Let $\mathcal{L}= \{t>0 : t\in \mathcal{W}\}$. If $\mathcal{L}$ is nonempty, then, define $s=\min \mathcal{L}$, otherwise choose $s$ to be any real number in $(0,1)$. Then, each matrix in $E((0,s))$ has distinct eigenvalues. Hence for arbitrary $\epsilon>0$, the open ball $\mathrm{B}(A;\epsilon)$ has nonempty intersection with $\Omega_d$. Hence $\Omega_d$ is dense in $\Omega$.
\end{proof}

The idea of the proof of the above theorem can be generalized as follows:

\begin{thm}\label{poly-eig-den}
Let $\Gamma$ be a subset of $ M^{\mathbb{R}}_{n \times n}(\mathbb{C}) $ with the properties that, if  $A,B \in \Gamma$, then there exists a function $p(x):[0,1]\rightarrow M^{\mathbb{R}}_{n\times n}(\mathbb{C})$ whose entries are  analytic functions of $x$ on $[0,1]$ such that $p(0)=A , p(1)=B$ and $p([0,1]) \subset \Gamma$. Let $\Gamma_d$ be the matrices in $\Gamma$ whose eigenvalues are distinct. If $\Gamma$ contains at least one matrix with distinct eigenvalues, then $\Gamma_d$ is dense in $\Gamma$.
\end{thm}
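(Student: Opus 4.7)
The plan is to mirror the proof of Theorem \ref{a3}, replacing the convex combination $(1-t)A+tB$ by the analytic path supplied by the hypothesis on $\Gamma$. First I would fix an arbitrary $A \in \Gamma$ and, using the assumption that $\Gamma$ contains at least one matrix with distinct eigenvalues, pick some $B \in \Gamma_d$. The hypothesis then furnishes a matrix-valued function $p(x):[0,1] \to M^{\mathbb{R}}_{n\times n}(\mathbb{C})$ whose entries are analytic on $[0,1]$, with $p(0)=A$, $p(1)=B$, and $p([0,1]) \subset \Gamma$.

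The key step is to observe that $p$ satisfies exactly the hypotheses of Theorem \ref{a2}: its entries are analytic on $[0,1]$, and each $p(t)$ lies in $M^{\mathbb{R}}_{n\times n}(\mathbb{C})$. Applying that theorem to $p$, the set $\mathcal{W} = \{t \in [0,1] : p(t) \text{ has repeated eigenvalues}\}$ is either all of $[0,1]$ or finite. Since $p(1)=B \in \Gamma_d$ has distinct eigenvalues, $1 \notin \mathcal{W}$, forcing $\mathcal{W}$ to be finite.

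To finish, I would set $\mathcal{L} = \{t > 0 : t \in \mathcal{W}\}$ and define $s = \min \mathcal{L}$ when $\mathcal{L}$ is nonempty, otherwise choose $s$ to be any number in $(0,1)$; then for every $t \in (0,s)$ the matrix $p(t)$ belongs to $\Gamma$ and has distinct eigenvalues, so $p(t) \in \Gamma_d$. Since $p$ is continuous at $0$ (analytic on $[0,1]$) with $p(0)=A$, for any $\epsilon > 0$ there is some $t \in (0,s)$ with $\Vert p(t)-A \Vert < \epsilon$, so the open ball $\mathrm{B}(A;\epsilon)$ meets $\Gamma_d$. As $A \in \Gamma$ is arbitrary, $\Gamma_d$ is dense in $\Gamma$. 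I do not anticipate any serious obstacle here: the argument is a direct transplant of the proof of Theorem \ref{a3}, and the only substantive point is checking that the analytic connecting path inherits the hypotheses of Theorem \ref{a2}, which is immediate from the statement of the theorem.
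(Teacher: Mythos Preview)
Your proposal is correct and follows exactly the approach indicated by the paper, whose proof simply reads ``Proof is similar to that of Theorem \ref{a3}.'' You have faithfully carried out that transplant, replacing the convex segment by the analytic path and invoking Theorem \ref{a2} in place of the polynomial argument; there is nothing to add.
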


\begin{proof} Proof is similar to that of Theorem \ref{a3}.
\end{proof}

\subsection{Analytic singular value case}
In this section, we prove results related to the denseness of the set of all matrices with distinct analytic singular values. The following is useful in the proof of the main result of this section.

\begin{thm}\label{ds}
    Let $F:[a,b] \rightarrow M_{m \times n} (\mathbb{C})$ be a  function whose entries are analytic functions on $[a,b]$. Let $\mathcal{W}$ be the collection of $x$ in $[a,b]$, for which $F(x)$ has repeated singular values. Then, either $\mathcal{W}=[a,b]$ or $\mathcal{W}$ is finite.
\end{thm}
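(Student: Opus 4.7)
The plan is to run the exact argument of Theorem \ref{a2}, with Theorem \ref{asvd1} playing the role that Theorem \ref{a9} played there. Indeed, Theorem \ref{a2} used that along a real analytic family of matrices with real spectrum one can analytically track the eigenvalues; the analytic singular value decomposition provides the same machinery for the squared singular values of a rectangular analytic family.

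First I would dispose of the degenerate case in which $F$ is identically zero on $[a,b]$: then every $F(x)$ is the zero matrix, all of whose singular values equal $0$, so $\mathcal{W}=[a,b]$ and we are done. Otherwise, Theorem \ref{asvd1} applies and yields a factorization $F(x)=X(x)S(x)Y(x)^\ast$ on $[a,b]$ in which the functions $e_i(x):=s_i(F(x))^2$, being the eigenvalues of $F(x)^\ast F(x)$, are analytic on $[a,b]$. Since the standard (non-negative) singular values of $F(x)$ are $\sqrt{e_i(x)}$, the matrix $F(x)$ has repeated singular values precisely when $e_i(x)=e_j(x)$ for some $i\neq j$. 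Consequently
$$\mathcal{W}=\bigcup_{i\neq j}\bigl\{x\in[a,b]:e_i(x)=e_j(x)\bigr\},$$
and applying Lemma \ref{a1} to the analytic functions $e_1,\dots,e_n$ shows at once that $\mathcal{W}$ is either all of $[a,b]$ or finite.

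The only mildly subtle point, and the one I would flag most carefully, is that the analytic singular values $s_i(F(x))$ delivered by Theorem \ref{asvd1} need not be the usual non-negative singular values (as the paper stresses, they may be negative and in arbitrary order), so one should not compare the $s_i$ directly via Lemma \ref{a1}. Passing to the squares $e_i$—which, being genuine eigenvalues of the positive semidefinite matrix $F(x)^\ast F(x)$, are unambiguously determined and provably analytic—sidesteps this ambiguity and lands us exactly in the hypotheses of Lemma \ref{a1}.
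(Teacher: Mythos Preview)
Your argument is correct and follows the same blueprint as the paper---invoke the analytic singular value decomposition (Theorem~\ref{asvd1}) and then feed the resulting analytic functions into Lemma~\ref{a1}. The paper's own proof applies Lemma~\ref{a1} directly to the analytic singular values $s_i(F(x))$, writing $\mathcal{W}=\bigcup_{i\neq j}\{x: s_i(F(x))=s_j(F(x))\}$; you instead pass to the squares $e_i=s_i^2$ and apply Lemma~\ref{a1} to those.

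Your version is in fact the more careful one, for precisely the reason you flag. Since the analytic singular values furnished by Theorem~\ref{asvd1} may be negative, the ordinary singular values of $F(x)$ are the $|s_i(F(x))|$, and a repetition among them is equivalent to $e_i(x)=e_j(x)$, not to $s_i(x)=s_j(x)$. (For instance, on $[-1,1]$ the $1\times 2$ family $F(t)=\diag(t,1)$ has analytic singular values $t$ and $1$, which never coincide, yet the ordinary singular values collide at $t=-1$.) So the paper's displayed identity for $\mathcal{W}$ is, strictly speaking, only an inclusion, whereas your identity with the $e_i$ is exact. You also correctly split off the case $F\equiv 0$, which is excluded from the hypotheses of Theorem~\ref{asvd1}. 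Both refinements cost nothing and close genuine gaps, while the overall strategy remains identical to the paper's.
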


\begin{proof}
    As each entry of $F(x)$ is an analytic function on $[a,b]$,  the function  $F:[a,b]\rightarrow M_{m\times n} (\mathbb{C})$ is an analytic matrix valued function. 
    Thus, by Theorem \ref{asvd1}, there exists an analytic singular value decomposition for $F(x)$ on $[a,b]$. Let $F(x)=U(x)S(x)V(x)^\ast$, where $U(x)$ and $V(x)$ are unitary and $S(x)$ is diagonal. Let $s_1(F(x)), s_2(F(x)),\dots, s_n(F(x))$ be the diagonal entries of $S(x)$. Then the functions $s_1(F(x)), s_2(F(x)),\dots, s_n(F(x))$ are analytic on $[a,b]$. As $\mathcal{W}=\bigcup\limits_{i\neq j} \{x\in [a,b] :s_i(F(x))=s_j(F(x))\}$, hence, by Lemma \ref{a1}, either $\mathcal{W}$ is finite or $\mathcal{W}=[a,b]$.
\end{proof}

As a consequence of the previous theorem, we shall establish that if a convex subset $\Omega$ of  $M_{m \times n}(\mathbb{C})$ contains a matrix whose eigenvalues are distinct, then the set of all matrices in $\Omega$ with distinct eigenvalues forms a dense subset of $\Omega$.

\begin{thm}
    Let $\Omega$ be a convex subset of  $M_{m \times n}(\mathbb{C})$, and $\Omega_d$ be the matrices in $\Omega$ having distinct singular values. Then $\Omega_d$ is dense in $\Omega$ if and only if $\Omega_d$ is non empty.
\end{thm}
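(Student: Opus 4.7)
The plan is to mirror the proof of Theorem \ref{a3} (the analytic eigenvalue version) exactly, substituting Theorem \ref{ds} for Theorem \ref{a2}. Since this theorem is essentially a reproof of Theorem \ref{conv-dist-sing} via the analytic singular value decomposition route, no new ingredients beyond Theorem \ref{ds} and convexity are required.

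The forward direction is immediate: if $\Omega_d$ is dense in a nonempty $\Omega$, then $\Omega_d$ must itself be nonempty. For the substantive direction, I assume $\Omega_d \neq \emptyset$ and fix an arbitrary $A \in \Omega$ and some $B \in \Omega_d$. The idea is to interpolate: set $E(t) = (1-t)A + tB$ for $t \in [0,1]$. Convexity of $\Omega$ guarantees $E([0,1]) \subseteq \Omega$. Moreover, each entry of $E(t)$ is a polynomial in $t$, hence analytic on an open interval containing $[0,1]$, so $E$ is an analytic matrix-valued function in the sense of Theorem \ref{ds}.

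Now I would invoke Theorem \ref{ds} for the function $E$: the set $\mathcal{W} = \{t \in [0,1] : E(t) \text{ has repeated singular values}\}$ is either all of $[0,1]$ or finite. Because $E(1) = B \in \Omega_d$ has distinct singular values, $1 \notin \mathcal{W}$, which rules out the first alternative and forces $\mathcal{W}$ to be finite. To extract a point of $\Omega_d$ near $A$, I would set $\mathcal{L} = \{t > 0 : t \in \mathcal{W}\}$, take $s = \min \mathcal{L}$ when this set is nonempty, and otherwise choose any $s \in (0,1)$. Then every $t \in (0,s)$ yields $E(t) \in \Omega_d$, and since $\|E(t) - A\|_F = t \|B - A\|_F$, we may choose $t$ small enough that $E(t) \in \mathrm{B}(A;\epsilon)$ for any given $\epsilon > 0$. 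Arbitrariness of $A \in \Omega$ then gives density.

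There is no real obstacle here; the whole argument is a direct transcription of Theorem \ref{a3}, with the appeal to Theorem \ref{a2} replaced by Theorem \ref{ds}. The only thing to be mildly careful about is that Theorem \ref{ds} is stated on a closed interval $[a,b]$, but $E$ being polynomial extends analytically to a neighborhood of $[0,1]$, so no domain issues arise. Because the argument is nearly verbatim to Theorem \ref{a3}, it is reasonable (and the excerpt essentially does so) to state the proof as \emph{similar to that of Theorem \ref{a3}} rather than repeat the calculation.
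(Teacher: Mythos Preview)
Your proposal is correct and follows exactly the approach the paper takes: the paper's proof simply reads ``similar to that of Theorem \ref{a3} (Using Theorem \ref{ds}),'' which is precisely what you have written out in detail.
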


\begin{proof}
    The proof is similar to that of Theorem \ref{a3} (Using Theorem \ref{ds}).
\end{proof}

\begin{thm}
Let $\Gamma$ be a subset of $ M_{m \times n}(\mathbb{C}) $ with the properties that, if  $A,B \in \Gamma$, then there exists a function $p(x):[0,1]\rightarrow M_{m \times n}(\mathbb{C})$ whose entries are analytic functions of $x$ on $[0,1]$ such that $p(0)=A , p(1)=B$ and $p([0,1]) \subset \Gamma$. Let $\Gamma_d$ be the matrices in $\Gamma$ whose singular values are distinct. If $\Gamma$ contains at least one matrix with distinct singular values, then $\Gamma_d$ is dense in $\Gamma$.
\end{thm}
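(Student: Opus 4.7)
The plan is to follow the template of Theorem \ref{a3} and its generalization Theorem \ref{poly-eig-den}, substituting the singular-value statement Theorem \ref{ds} for the eigenvalue statement Theorem \ref{a2}. Fix an arbitrary $A \in \Gamma$ and, using the hypothesis that $\Gamma$ contains a matrix with distinct singular values, pick $B \in \Gamma_d$. By the assumption on $\Gamma$, there exists $p:[0,1] \to M_{m \times n}(\mathbb{C})$ with entries analytic on $[0,1]$, with $p(0)=A$, $p(1)=B$, and $p([0,1]) \subseteq \Gamma$.

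Next I would apply Theorem \ref{ds} to $p$. Let $\mathcal{W} = \{ t \in [0,1] : p(t) \text{ has repeated singular values}\}$. Since $p(1)=B$ has distinct singular values, $1 \notin \mathcal{W}$, so $\mathcal{W} \neq [0,1]$. Theorem \ref{ds} then forces $\mathcal{W}$ to be finite. Let $\mathcal{L} = \{ t > 0 : t \in \mathcal{W}\}$; set $s = \min \mathcal{L}$ if $\mathcal{L}$ is nonempty, and otherwise set $s$ to be any real number in $(0,1)$. Then for every $t \in (0,s)$, the matrix $p(t)$ lies in $\Gamma$ and has distinct singular values, i.e.\ $p(t) \in \Gamma_d$.

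Finally, I would invoke continuity of $p$ (which follows from its entries being analytic, hence continuous, on $[0,1]$) to conclude the density. Given $\epsilon > 0$, choose $t \in (0,s)$ small enough so that $\Vert p(t) - A \Vert_F = \Vert p(t) - p(0) \Vert_F < \epsilon$. Then $p(t) \in \mathrm{B}(A;\epsilon) \cap \Gamma_d$, showing $\mathrm{B}(A;\epsilon) \cap \Gamma_d \neq \emptyset$. Since $A \in \Gamma$ was arbitrary, $\Gamma_d$ is dense in $\Gamma$.

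The proof has no real obstacle: every ingredient is supplied by earlier results, and the argument is essentially a rerun of the proof of Theorem \ref{poly-eig-den} with Theorem \ref{ds} replacing Theorem \ref{a2}. The only conceptual point worth noting is that, unlike in the analytic-eigenvalue case, no restriction of the form $\Gamma \subseteq M^{\mathbb{R}}_{n \times n}(\mathbb{C})$ is required here, because the analytic singular value decomposition in Theorem \ref{asvd1} is available for arbitrary analytic matrix valued functions into $M_{m \times n}(\mathbb{C})$ (with $m \geq n$), which is exactly what drives Theorem \ref{ds}.
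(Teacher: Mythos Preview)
Your proposal is correct and follows exactly the approach the paper indicates: the paper's proof reads ``The proof is similar to that of Theorem \ref{poly-eig-den} (Using Theorem \ref{ds}),'' and you have spelled out precisely that argument, replacing the analytic-eigenvalue input Theorem \ref{a2} by the analytic-singular-value input Theorem \ref{ds}.
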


\begin{proof} The proof is similar to that of Theorem \ref{poly-eig-den} (Using  Theorem \ref{ds}).
\end{proof}

Using Theorem \ref{asvd}, all the results of this subsection can be proved for $M_{m\times n}(\mathbb{R})$.

\section{Dense subsets of matrices having distinct roots with respect to polynomials}\label{sec-5}

In this section, we define a class of polynomials in terms of the entries of the entries of the matrices, and prove some results  related to the denseness of subsets of matrices for which the polynomials have distinct roots. Let $\mathbb{C}^n_{\sym}$ be the set of all unordered $n$-tuple of complex numbers, and $\mathbb{C}_n[x]$ denote the set of all polynomials of degree $n$. Define the function $r_n: \mathbb{C}_n[x] \rightarrow \mathbb{C}^n  _{\sym}$ such that the image of a polynomial $f$,  $r_n(f)$, is the unordered $n$-tuple whose entries are the roots of the polynomial $f$. Let $\mathcal{P}_k$ denote the set of all functions $p_x:M_{m\times n}(\mathbb{C})\rightarrow \mathbb{C}_k[x]$ defined by $p_x(A)=x^k+\sum\limits_{i=1}^k q_i(A)x^{i-1}$, where each $q_i(A)$ is a polynomial function of the entries of $A$ such that $r_k\left(p_x\left( M_{m \times n}(\mathbb{C})\right)\right)=\mathbb{C}_{\sym}^k$ .

\begin{defn}
For an $m\times n$ complex matrix $A$, and a polynomial $p_x \in \mathcal{P}_k$, we call $z\in \mathbb{C}$ a zero of $A$ with respect to $p_x$ if $p_z(A)=0$.
\end{defn}

The following example shows that each $\mathcal{P}_k$ is nonempty for $1 \leq k \leq mn.$

\begin{exam}
For an $m \times n$ matrix $A$ with $(i,j)$th entry $a_{ij}$, let us consider $A$ as an element of $\mathbb{C}^{mn}$ by the representation $A=(a_{11},a_{12},\dots,a_{1n},a_{21},a_{22},\dots,a_{2n},\dots, a_{m1},a_{m2}, \dots, a_{mn})$. Now, for a fixed $k$ in $\{1,2,\dots,mn\}$, let us define a function $p_x:M_{m\times n}(\mathbb{C})\rightarrow \mathbb{C}_k[x]$ by $p_x(A)=x^k+\sum\limits_{i=1}^k q_i(A)x^{i-1}$, where $(q_1(A),q_2(A),\dots, q_k(A))$ is the first $k$ coordinates of $A$ in $\mathbb{C}^{mn}$. 
So  $p_x\in \mathcal{P}_k$.
\end{exam}

Now, for a  function $F:\mathcal{D} \subseteq \mathbb{C} \rightarrow M_{m\times n}(\mathbb{C})$ and a fixed $p_x \in \mathcal{P}_k$, where $2\leq k \leq mn$, define $\mathcal{Z}_{p_x}(\mathcal{D}) = \{z \in \mathcal{D} :  F(z) \mbox{ has repeated zeros with respect to } p_x \}$.

Next, we shall prove some theorems for the functions in $\mathcal{P}_k$, where $2\leq k\leq mn$, which are similar to some theorems in Section \ref{cls-eigen-sing} and in Remark \ref{b4}, we shall show, how we can use these theorems to prove theorems in Section \ref{cls-eigen-sing}.

\begin{thm}\label{b3}
	Let $\mathcal{D}$ be an open connected subset of $\mathbb{C}$, and $F:\mathcal{D} \rightarrow M_{m \times n} (\mathbb{C})$ be a  function whose entries are analytic functions on $\mathcal{D}$.   Then, for a $p_x\in \mathcal{P}_k$ either $\mathcal{Z}_{p_x}(\mathcal{D})=\mathcal{D}$ or $\mathcal{Z}_{p_x}(\mathcal{D})$ has no limit points.
\end{thm}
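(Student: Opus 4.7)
The plan is to mimic the proof of Theorem \ref{a4} almost verbatim, with the characteristic polynomial replaced by the general polynomial $p_x$ from the class $\mathcal{P}_k$. The key observation is that the coefficients of the polynomial, when evaluated at $F(z)$, remain analytic in $z$, so the resultant trick still applies.

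First, I would fix $p_x \in \mathcal{P}_k$ and, for $z \in \mathcal{D}$, write
\[
P_z(x) \;:=\; p_x(F(z)) \;=\; x^k + \sum_{i=1}^{k} q_i(F(z))\, x^{i-1}.
\]
Since each $q_i$ is a polynomial in the entries of its matrix argument, and since the entries of $F$ are analytic on $\mathcal{D}$, each coefficient $z \mapsto q_i(F(z))$ is an analytic function on $\mathcal{D}$. Thus $P_z(x)$ is a monic polynomial in $x$ of degree $k$ whose coefficients are analytic functions of $z$ on $\mathcal{D}$. By definition, $z \in \mathcal{Z}_{p_x}(\mathcal{D})$ precisely when $P_z(x)$ has a repeated root.

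Next I would invoke Theorem \ref{res}: since $P_z$ is monic of degree $k \geq 2$, it has a repeated root if and only if $\Res(P_z, P_z') = 0$, where
\[
P_z'(x) \;=\; k x^{k-1} + \sum_{i=2}^{k} (i-1)\, q_i(F(z))\, x^{i-2}.
\]
The resultant is a fixed polynomial expression in the coefficients of $P_z$ and $P_z'$, so $R(z) := \Res(P_z, P_z')$ is itself an analytic function of $z$ on $\mathcal{D}$, and $\mathcal{Z}_{p_x}(\mathcal{D})$ is exactly the zero set of $R$.

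Finally, I would apply the identity theorem for analytic functions on the open connected set $\mathcal{D}$: either $R \equiv 0$ on $\mathcal{D}$, in which case every matrix in $F(\mathcal{D})$ has repeated zeros with respect to $p_x$ and $\mathcal{Z}_{p_x}(\mathcal{D}) = \mathcal{D}$, or the zero set of $R$ has no limit points in $\mathcal{D}$, giving the second alternative. There is no serious obstacle here; the only point worth stating carefully is that polynomial functions of analytic functions are analytic, which legitimizes the analyticity of $R(z)$ and lets the argument of Theorem \ref{a4} go through essentially unchanged.
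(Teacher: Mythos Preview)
Your proposal is correct and follows essentially the same approach as the paper's own proof: the paper also writes $p_x(F(z)) = x^k + \sum_{i=1}^k f_i(F(z))x^{i-1}$, observes that each coefficient is analytic in $z$ because it is a polynomial in the analytic entries of $F(z)$, and then defers to the argument of Theorem~\ref{a4} (i.e., the resultant/identity-theorem trick you spelled out). There is nothing to add.
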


\begin{proof}
	Let $$p_x(F(z))=x^k+ \sum_{i=1}^k f_i(F(z))x^{i-1}.$$ Then, by definition of $p_x$, each $f_i(F(z))$ is a polynomial function of the entries of $F(z)$, hence for each $i$, $f_i(F(z))$ is an analytic function of $z$. 
Rest of the proof is similar to that of the Theorem \ref{a4}.
\end{proof}

\begin{thm}\label{a5}
	Let $F:\mathcal{D} \subseteq \mathbb{C} \rightarrow M_{m \times n} (\mathbb{C})$ be a function defined by $F(z)=\sum\limits_{i=0}^s A_i z^i,$ where $A_i \in M_{m \times n}(\mathbb{C})$ for $i=0,1,\dots,s$. Then, for a $p_x$ in $\mathcal{P}_k$, either $\mathcal{Z}_{p_x}(\mathcal{D})=\mathcal{D}$ or $\mathcal{Z}_{p_x}(\mathcal{D})$ is finite.
\end{thm}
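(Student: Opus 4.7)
The plan is to mimic the proof of Theorem \ref{pd} almost verbatim, with the characteristic polynomial replaced by the given $p_x \in \mathcal{P}_k$. The key observation that makes this work is that the coefficient functions of a polynomial in $\mathcal{P}_k$ are, by definition, polynomial functions of the entries of the matrix, so composing them with a matrix whose entries are polynomials in $z$ yields polynomials in $z$.

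First I would expand $p_x(F(z)) = x^k + \sum_{i=1}^{k} f_i(F(z)) x^{i-1}$, noting that each $f_i$ is a polynomial in the entries of its argument. Since $F(z) = \sum_{i=0}^{s} A_i z^i$, every entry of $F(z)$ is a polynomial in $z$, so each composition $f_i(F(z))$ is a polynomial in $z$. Next I would observe that $F(z)$ has repeated zeros with respect to $p_x$ precisely when the polynomial $p_x(F(z)) \in \mathbb{C}[x]$ has a repeated root, which by Theorem \ref{res} happens exactly when $\Res\bigl(p_x(F(z)), p'_x(F(z))\bigr) = 0$, where the derivative is taken in the variable $x$.

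The Sylvester determinant defining this resultant is a polynomial in the coefficients $f_i(F(z))$, hence a polynomial in $z$. Therefore the zero set of $z \mapsto \Res\bigl(p_x(F(z)), p'_x(F(z))\bigr)$ is either all of $\mathbb{C}$ (hence equal to $\mathcal{D}$) or a finite set. In the first case every point of $\mathcal{D}$ lies in $\mathcal{Z}_{p_x}(\mathcal{D})$, giving $\mathcal{Z}_{p_x}(\mathcal{D}) = \mathcal{D}$; in the second case $\mathcal{Z}_{p_x}(\mathcal{D})$ is a subset of a finite set and so finite.

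I do not anticipate a real obstacle here: the statement is essentially a direct translation of Theorem \ref{pd} from characteristic polynomials to the class $\mathcal{P}_k$, and the only substantive point to verify is that the composition $f_i \circ F$ is still a polynomial in $z$ so that the resultant remains a polynomial in $z$. This is immediate from the definition of $\mathcal{P}_k$, since polynomials are closed under composition with polynomial-valued functions. Thus the proof reduces to invoking Theorem \ref{res} and the fact that a nonzero polynomial in one complex variable has only finitely many zeros.
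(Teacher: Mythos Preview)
Your proposal is correct and matches the paper's own proof essentially word for word: the paper writes $p_x(F(z))=x^k+\sum_{i=1}^{k} f_i(F(z))x^{i-1}$, observes that each $f_i(F(z))$ is a polynomial in $z$ by the definition of $\mathcal{P}_k$, and then declares the rest identical to the proof of Theorem~\ref{pd}. Your expanded account of why the resultant is a polynomial in $z$ and how the dichotomy follows is exactly the content the paper suppresses under ``similar to Theorem~\ref{pd}.''
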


\begin{proof}
	Let $$p_x(F(z))=x^k+ \sum_{i=1}^k f_i(F(z))x^{i-1},$$ then, by definition of $p_x$, each $f_i(F(z))$ is a polynomial in $z$. 
Rest of the proof is similar to that of Theorem \ref{pd}.
\end{proof}



\begin{thm}\label{a6}
	Let $\Omega$ be a convex subset of  $M_{m \times n}(\mathbb{C})$, and $\Omega_d$ be the matrices in $\Omega$ having distinct zeros with respect to a fixed $p_x$ in $\mathcal{P}_k$. Then $\Omega_d$ is dense in $\Omega$ if and only if $\Omega_d$ is non empty.
\end{thm}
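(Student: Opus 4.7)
The plan is to mirror the convex-combination argument used in Theorem \ref{conv-dist-sing} and Theorem \ref{a3}, now invoking Theorem \ref{a5} in place of Theorem \ref{fin}/Theorem \ref{a2}. The only-if direction is immediate: if $\Omega_d$ is dense in $\Omega$, then (assuming $\Omega$ is nonempty, which we may, since otherwise the statement is vacuous) $\Omega_d$ is obviously nonempty. So the content is the converse.

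For the converse, I would fix arbitrary $A \in \Omega$ and $B \in \Omega_d$, and consider the line segment $E(t) = (1-t)A + tB$ for $t \in [0,1]$. Convexity of $\Omega$ gives $E([0,1]) \subset \Omega$, and $E$ has the form $A + t(B-A)$, so it is a polynomial (of degree one) in $t$ with matrix coefficients. Hence Theorem \ref{a5} applies to $E$ regarded as a function $[0,1] \to M_{m\times n}(\mathbb{C})$: the set $\mathcal{Z}_{p_x}([0,1])$ of parameter values at which $E(t)$ has repeated zeros with respect to $p_x$ is either all of $[0,1]$ or finite. Since $B = E(1)$ lies in $\Omega_d$, the value $t=1$ is not in $\mathcal{Z}_{p_x}([0,1])$, so the first alternative is excluded and $\mathcal{Z}_{p_x}([0,1])$ is finite.

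To conclude density, set $\mathcal{L} = \{ t > 0 : t \in \mathcal{Z}_{p_x}([0,1])\}$. If $\mathcal{L}$ is nonempty let $s = \min \mathcal{L}$, otherwise pick any $s \in (0,1)$. Then for every $t \in (0,s)$ the matrix $E(t)$ lies in $\Omega$ and has distinct zeros with respect to $p_x$, so $E(t) \in \Omega_d$. Since $E(t) \to A$ as $t \to 0^+$ in the Frobenius norm, every open ball $\mathrm{B}(A;\epsilon)$ contains a point of $\Omega_d$. As $A \in \Omega$ was arbitrary, $\Omega_d$ is dense in $\Omega$.

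There is really no substantive obstacle here, because Theorem \ref{a5} already does all of the algebraic heavy lifting (it supplies the dichotomy that the bad parameter set is either the full interval or finite, via the resultant applied to a polynomial coefficient family). The only point to be careful about is invoking Theorem \ref{a5} over $\mathcal{D} = [0,1] \subset \mathbb{R} \subset \mathbb{C}$ (an infinite set, so the dichotomy is meaningful) and noting that the coefficients $q_i(E(t))$ of $p_x(E(t))$, being polynomials in the entries of $E(t)$ which in turn depend polynomially on $t$, are polynomials in $t$; this is exactly the hypothesis under which Theorem \ref{a5} is stated. Everything else is a verbatim transcription of the convex-combination arguments already carried out in this section.
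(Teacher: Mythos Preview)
Your proposal is correct and follows exactly the approach the paper has in mind: the paper omits the proof of Theorem~\ref{a6} entirely, but the surrounding context (Theorems~\ref{conv-dist-sing} and~\ref{a3}, together with Theorem~\ref{a5} playing the role of Theorem~\ref{fin}) makes clear that the intended argument is precisely the convex-combination and finite-bad-set reasoning you have written out.
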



The following theorem can be proved using Theorem \ref{a5} and Theorem \ref{a6}.

\begin{thm}\label{b2}
	Let $\Gamma$ be a subset of $ M_{m \times n}(\mathbb{C}) $ with the properties that, if  $A,B \in \Gamma$, then there exists a polynomial $q(z)=\sum\limits_{i=0}^s A_i z^i$ on $[0,1]$ such that $q(0)=A , q(1)=B$ and $q([0,1]) \subset \Gamma$, where each $A_i \in M_{m \times n}(\mathbb{C})$. Let $\Gamma_d$ be the matrices in $\Gamma$ whose zeros are distinct with respect to a fixed $p_x$ in $ \mathcal{P}_k$. Then, $\Gamma_d$ is dense in $\Gamma$ if and only if $\Gamma_d$ is nonempty.
\end{thm}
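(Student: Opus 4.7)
The plan is to mirror the argument used for Theorem \ref{conv-gen-eig} and Theorem \ref{a6}, replacing the convex combination with the polynomial path guaranteed by the hypothesis on $\Gamma$, and then invoking Theorem \ref{a5} to locate the bad parameter values.

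First, the forward implication is immediate: if $\Gamma_d$ is dense in $\Gamma$ and $\Gamma$ is nonempty (which is implicit, else the statement is vacuous), then $\Gamma_d$ itself must be nonempty. For the converse, I fix an arbitrary $A \in \Gamma$ and some $B \in \Gamma_d$. By the hypothesis on $\Gamma$, I can pick a polynomial $q(z) = \sum_{i=0}^{s} A_i z^i$ with matrix coefficients so that $q(0) = A$, $q(1) = B$, and $q([0,1]) \subset \Gamma$. Viewing $[0,1]$ as a subset of $\mathbb{C}$, I apply Theorem \ref{a5} to the function $q$ and the fixed element $p_x \in \mathcal{P}_k$: the set $\mathcal{Z}_{p_x}([0,1])$ of parameters $t$ for which $q(t)$ has repeated zeros with respect to $p_x$ is either all of $[0,1]$ or finite. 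Since $q(1) = B$ belongs to $\Gamma_d$, the point $t = 1$ is not in $\mathcal{Z}_{p_x}([0,1])$, so the first alternative is ruled out, and $\mathcal{Z}_{p_x}([0,1])$ is finite.

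Now I form $\mathcal{L} = \{t > 0 : t \in \mathcal{Z}_{p_x}([0,1])\}$. If $\mathcal{L}$ is nonempty, set $s_0 = \min \mathcal{L}$; otherwise, take $s_0$ to be any real number in $(0,1)$. Then for every $t \in (0, s_0)$ the matrix $q(t)$ lies in $\Gamma$ and has distinct zeros with respect to $p_x$, hence $q(t) \in \Gamma_d$. Since $q$ is continuous (indeed polynomial) in $t$ and $q(0) = A$, for any $\epsilon > 0$ there exists $t \in (0, s_0)$ small enough that $\Vert q(t) - A \Vert_F < \epsilon$, so $\mathrm{B}(A; \epsilon) \cap \Gamma_d \neq \emptyset$. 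As $A \in \Gamma$ was arbitrary, $\Gamma_d$ is dense in $\Gamma$.

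There is no substantive obstacle in this proof; all of the real work was done in Theorem \ref{a5}. The only minor point to be careful about is that the polynomial path hypothesis in the theorem statement talks about a polynomial defined on $[0,1]$, but Theorem \ref{a5} is phrased for domains $\mathcal{D} \subseteq \mathbb{C}$; this is unproblematic because $[0,1] \subset \mathbb{R} \subset \mathbb{C}$, and the computation in Theorem \ref{a5} (that each $f_i(F(z))$ is a polynomial in $z$ and therefore the resultant $\mathrm{Res}(p_x, p_x')$ is a polynomial in $z$) applies verbatim on this real interval to yield a finite (in fact empty or a finite subset of $[0,1]$) exceptional set.
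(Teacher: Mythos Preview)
Your proof is correct and follows essentially the same approach the paper indicates: it invokes Theorem~\ref{a5} to force the exceptional parameter set to be finite along the polynomial path and then mimics the density argument of Theorem~\ref{a6} (and Theorem~\ref{conv-dist-sing}) to push matrices in $\Gamma_d$ arbitrarily close to any $A\in\Gamma$. The paper itself omits the details, merely pointing to Theorems~\ref{a5} and~\ref{a6}, and what you have written is exactly the expected elaboration.
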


\begin{rmk}\label{b4}
	{\rm It is easy to see that  the characteristic polynomial of an $n \times n$ matrix $A$,  and for an $m \times n$ matrix $A$ the polynomial $\det(xI-A^\ast A)$ are in $\mathcal{P}_n$.  Hence the results of this section generalizes the results of Section 3.}


\end{rmk}

\textbf{Acknowledgment:}  M. Rajesh Kannan would like to thank the Department of Science and Technology, India, for financial support through the projects MATRICS (MTR/2018/000986) and Early Career Research Award (ECR/2017/000643) .

\bibliographystyle{plain}
\bibliography{him-raj}

\end{document}